\definecolor{antiquefuchsia}{rgb}{0.57, 0.36, 0.51}
\numberwithin{equation}{section}
\newtheorem{theorem}{Theorem}[section]
\newtheorem{lemma}[theorem]{Lemma}
\newtheorem{proposition}[theorem]{Proposition}
\newtheorem{remark}[theorem]{Remark}
\newtheorem{definition}[theorem]{Definition}
\theoremstyle{remark}
\def\@rst #1 #2other{#1}
\newcommand\MR[1]{\relax\ifhmode\unskip\spacefactor3000 \space\fi
  \MRhref{\expandafter\@rst #1 other}{#1}}
\newcommand{\MRhref}[2]{\href{http://www.ams.org/mathscinet-getitem?mr=#1}{MR#2}}
\def\MR#1{\href{http://www.ams.org/mathscinet-getitem?mr=#1}{MR#1}}
\newcommand{\C}{\mathbbm{C}}
\newcommand{\E}{\mathbbm{E}}
\newcommand{\N}{\mathbbm{N}}
\newcommand{\Z}{\mathbbm{Z}}
\newcommand{\R}{\mathbbm{R}}
\renewcommand{\P}{\mathbbm{P}}
\newcommand{\bbS}{\mathbbm{S}}
\newcommand{\lattice}{\mathrm{lattice}}
\newcommand{\LFPP}{\mathrm{LFPP}}
\DeclareMathOperator{\dit}{dist}
\DeclareMathOperator{\Var}{Var}
\def\alb#1\ale{\begin{align*}#1\end{align*}}
\def\allb#1\alle{\begin{align}#1\end{align}}
\newcommand{\aryb}{\begin{eqnarray*}}
\newcommand{\arye}{\end{eqnarray*}}
\def\alb#1\ale{\begin{align*}#1\end{align*}}
\newcommand{\eqb}{\begin{equation}}
\newcommand{\eqe}{\end{equation}}
\newcommand{\eqbn}{\begin{equation*}}
\newcommand{\eqen}{\end{equation*}}
\newcommand{\wt}{\widetilde}
\newcommand{\wh}{\widehat}
\let\originalleft\left
\let\originalright\right
\renewcommand{\left}{\mathopen{}\mathclose\bgroup\originalleft}
\renewcommand{\right}{\aftergroup\egroup\originalright}
\DeclareMathAlphabet{\mathpzc}{OT1}{pzc}{m}{it}
\begin{document}

\title{Comparison of discrete and continuum Liouville first passage percolation}
\author{
Morris Ang\footnote{Department of Mathematics, Massachusetts Institute of Technology. Email: \href{mailto:angm@mit.edu}{angm@mit.edu}}
} 
\date{  }
\maketitle

\begin{abstract}
Discrete and continuum Liouville first passage percolation (DLFPP, LFPP) are two approximations of the conjectural $\gamma$-Liouville quantum gravity (LQG) metric, obtained by exponentiating the discrete Gaussian free field (GFF) and the circle average regularization of the continuum GFF respectively. We show that these two models can be coupled so that with high probability distances in these models agree up to $o(1)$ errors in the exponent, and thus have the same distance exponent.

Ding and Gwynne (2018) give a formula for the continuum LFPP distance exponent in terms of the $\gamma$-LQG dimension exponent $d_\gamma$. Using results of Ding and Li (2018) on the level set percolation of the discrete GFF, we bound the DLFPP distance exponent and hence obtain a new lower bound $d_\gamma \geq 2 + \frac{\gamma^2}2$. This improves on previous lower bounds for $d_\gamma$ for the regime $\gamma \in (\gamma_0, 0.576)$, for some small nonexplicit $\gamma_0 > 0$.
\end{abstract}


\section{Introduction}

\subsection{Overview}
Let $h$ be a continuum Gaussian free field (GFF) on a simply connected domain $D \subset \C$. For $\gamma \in (0,2]$, the \emph{$\gamma$-Liouville quantum gravity ($\gamma$-LQG)} surface is, heuristically speaking, the random two-dimensional Riemannian manifold with metric given by $e^{\gamma h}(dx^2 + dy^2)$. This definition does not make literal sense as $h$ is a distribution (and so cannot be evaluated pointwise), but by using regularization procedures one can make sense of the random volume form of $\gamma$-LQG \cite{kahane, shef-kpz, rhodes-vargas-review}. An important open problem is to understand the metric structure of $\gamma$-LQG. In the special case $\gamma = \sqrt{\frac83}$, it was shown in \cite{lqg-tbm1, lqg-tbm2, lqg-tbm3} that $\sqrt{\frac83}$-LQG admits a natural metric structure which is isometric to the \emph{Brownian map}, a random metric space that is the scaling limit of uniform random planar maps \cite{legall-uniqueness, miermont-brownian-map}. The construction of the $\sqrt{\frac83}$-LQG metric is via a continuum growth process, and depends crucially on properties unique to $\gamma=\sqrt{\frac83}$.

In contrast, in recent years there have been many works trying to understand the conjectural $\gamma$-LQG metric for general $\gamma$ via various discretizations of $\gamma$-LQG. The papers \cite{ghs-map-dist, dzz-heat-kernel, dg-lqg-dim} prove the existence of a universal exponent $d_\gamma$ that describes distances in many of these discretizations, including Liouville graph distance, random planar maps, constructions involving the Liouville heat kernel, and \emph{continuum Liouville first passage percolation (LFPP)} which we define as follows. For a GFF $h$ on $D$ and $\xi > 0$, the continuum LFPP distance is the distance with respect to the Riemannian metric $e^{\xi h_1(z)} (dx^2 + dy^2)$, where $h_1(z)$ denotes the average of $h$ over the radius 1 circle $\partial B_1(z)$. Then for $D = [0,n]^2$ and $\xi = \frac{\gamma}{d_\gamma}$ the continuum LFPP distances scale as $n^{\frac{2}{d_\gamma} + \frac{\gamma^2}{2d_\gamma} + o(1)}$ \cite[Theorem 1.5]{dg-lqg-dim}. Continuum LFPP has also been studied in other works\footnote{Other works define continuum LFPP slightly differently; see Remark~\ref{rem_LFPP_defn}.} \cite{ding-goswami-watabiki, df-lqg-metric, GP-LFPP,DDDF-tightness}.

A discrete analog of continuum LFPP is \emph{discrete Liouville first passage percolation} (DLFPP), in which one samples a discrete Gaussian free field (DGFF) $\eta$ on an $n \times n$ lattice, assigns a weight of $e^{\xi \sqrt{\frac\pi2}\eta(v)}$ to each vertex $v$, and defines the distance between two vertices to be the weight of the minimum-weight path between the vertices. Previous works have studied DLFPP distances \cite{ding-goswami-watabiki}, geodesics \cite{dingzhang-geodesic}, and subsequential scaling limits \cite{DD19}. 

In this work we show that with high probability, up to an $n^{o(1)}$ multiplicative error $\xi$-DLFPP distances agree with $\xi$-continuum LFPP distances, and thereby conclude that for $\xi = \frac{\gamma}{d_\gamma}$ the $\xi$-DLFPP distance exponent agrees with the $\xi$-LFPP distance exponent. This proves a conjecture of \cite[Section 1.5]{dg-lqg-dim}. We can then use existing results on DGFF level set percolation \cite{ding-li-chem-dist} to upper bound DLFPP annulus crossing distances, leading to a new lower bound $d_\gamma \geq 2 + \frac{\gamma^2}{2}$. This lower bound is the best known for the range $\gamma \in (\gamma_0,0.576)$, where $\gamma_0>0$ is small and non-explicit. 
\medskip

\noindent \textbf{Acknowledgements:} We thank Jian Ding, Ewain Gwynne, and Scott Sheffield for helpful discussions. We especially thank Ewain Gwynne for suggesting the problem and for valuable comments on an earlier draft.

\medskip

\subsection{Main results}

Let $\bbS = [0,1]^2$ be the unit square. For any point $z \in \R^2$, let $[z]$ denote the lattice point closest to $z$. For any set $A \subset \R^2$, let $[A] = \{ [a] \: : \: a \in A\}$ be its lattice approximation, and for any positive integer $n \in \N$ write $nA = \{ na \: : \: a \in A\}$ for the dilation of $A$ by a factor of $n$. For example, $n\bbS = [0,n]^2$, and $[n\bbS] = \{0,\dots, n\} \times \{0,\dots, n\}$.  

Recall that LQG is, heuristically, the metric with conformal factor the exponential of a GFF. We define DLFPP on $[n\bbS]$ by exponentiating a DGFF $\eta^n$ (see Section~\ref{subsection_DGFF} for the definition of a DGFF). 

\begin{definition}[Discrete Liouville first passage percolation distance]\label{def_model}
For $\xi > 0$ and $n \in \N$, consider a zero boundary DGFF $\eta^n$ on $[n\bbS]$. We define the \emph{($\xi$-)DLFPP distance} $D^\xi_{\eta^n}(u,v)$ between $u,v \in [n\bbS]$ to be zero if $u = v$, and otherwise the minimum of $\sum_{j=0}^k e^{\xi\sqrt{\pi/2} \eta^n(w_j)}$ over paths from $w_0 = u$ to $w_k = v$ in $[n\bbS]$ (equipped with its standard nearest-neighbor graph adjacency). 

Furthermore, for a vertex set $S \subset [n\bbS]$ and $u,v \in S$, we define the \emph{restricted DLFPP distance} $D^\xi_{\eta^n}(u,v;S)$ to be the above minimum taken over paths which stay in $S$. For subsets $A, B \subset S$, we define $D^\xi_{\eta^n}(A,B;S)$ to be the minimum of $D^\xi_{\eta^n}(a,b;S)$ for $a \in A, b \in B$. 
\end{definition}

We similarly define continuum LFPP by replacing the DGFF with the unit radius circle averages of a continuum GFF $h^n$ (see \cite[Section 3.1]{shef-kpz} for the definition of a GFF and its circle averages) and replacing lattice paths with piecewise continuously differentiable paths. 

\begin{definition}[Continuum Liouville first passage percolation distance]\label{def_LFPP}
For $\xi>0$ and $n \in \N$, consider a continuum zero boundary GFF $h^n$ on $n\bbS$ extended to zero outside $n\bbS$. The \emph{($\xi$-)continuum LFPP distance} $D^\xi_{h^n, \LFPP}(z,w)$ is the infimum over all piecewise continuously differentiable paths $P:[0,T] \to n\bbS$ from $z$ to $w$ of the quantity $\int_0^T e^{\xi h^n_1(P(t))} |P'(t)| \ dt$, where $h_1^n(z)$ denotes the average of $h^n$ over the radius 1 circle $\partial B_1(z)$.

For open $S \subset n\bbS$ and $z,w \in S$, we define the \emph{restricted continuum LFPP distance} $D^\xi_{h^n, \LFPP}(z, w ; S)$ to be the above infimum over paths in $\overline S$, and for subsets $A, B \subset S$ define $D^\xi_{h^n, \LFPP} (A,B;S) = \inf_{a\in A, b \in B} D^\xi_{h^n, \LFPP}(a,b;S)$. 
\end{definition}
\begin{remark}\label{rem_LFPP_defn}
Our definition of continuum LFPP differs slightly from that of other works \cite{ding-goswami-watabiki, dg-lqg-dim,  GP-LFPP}, which have an additional parameter controlling the circle average radius. In this work we will always take the circle average radius to be 1. 

We are interested in continuum LFPP distances in the domain $n\bbS$ with the circle average radius set to 1. Conversely, \cite[Theorem 1.5]{dg-lqg-dim} considers continuum LFPP distances in the domain $\bbS$ but with circle average radius $\delta$; we set $\delta = \frac1n$. By the scale invariance of the GFF, when we identify $n\bbS$ with $\bbS$ by a dilation and use the same GFF for both models, our continuum LFPP distances are exactly $n$ times larger than those of \cite{dg-lqg-dim} (this factor of $n$ arises from the rescaling of the paths $P$).
\end{remark}

The normalization factor $\sqrt{\frac\pi2}$ of Definition~\ref{def_model} arises because the GFF and DGFF we use have differing normalization constants. We expect that for fixed $\xi$, the above definitions of $\xi$-DLFPP and $\xi$-continuum LFPP have the same (conjectural) scaling limit as $n \to \infty$.

We come to our main theorem, that we can couple a GFF $h^n$ with a DGFF $\eta^n$ so that with high probability the circle average regularized GFF $h^n_1$ and the DGFF multiple $\sqrt{\frac\pi2} \eta^n$ are uniformly not too far apart. Under this coupling, with high probability $\xi$-DLFPP and $\xi$-continuum LFPP distances agree up to a factor of $n^{o(1)}$.

\begin{theorem}[Coupling of $\eta^n$ and $h^n$]\label{thm_coupling}
There exists a coupling of the GFF $h^n$ and the DGFF $\eta^n$ such that for each $\zeta > 0$ and open $U \subset \bbS$ with $\dit(U, \partial \bbS) >0$, with superpolynomially high probability as $n \to \infty$ we have 
\[\max_{v \in [nU]} \left| h^n_1(v) - \sqrt{\frac\pi2} \eta^n(v)\right| \leq \zeta \log n. \]
Under this coupling, for each $\zeta > 0$ and rectilinear polygon $P \subset \bbS$ with $\dit(P, \partial \bbS)>0$, with polynomially high probability as $n \to \infty$, we have uniformly for all $z, w \in nP$ that 
\[n^{-\zeta} D^\xi_{\eta^n} ([z],[w]; [nP]) \leq D^\xi_{h^n, \LFPP} (z, w; nP) \leq n^\zeta \left( D^\xi_{\eta^n} ([z],[w];[nP] ) + e^{\xi \sqrt{\frac\pi2}\eta^n([z])} \right). \]
\end{theorem}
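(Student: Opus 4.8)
The plan is to prove the coupling statement first and then deduce the distance comparison from it. For the first part, the natural approach is to build the DGFF and the continuum GFF on a common probability space using a decomposition of the continuum GFF as a sum of a harmonic-extension piece plus an independent bulk piece, matched against the analogous Markov-type decomposition of the DGFF. Concretely, I would condition the continuum GFF $h^n$ on its values along (a neighborhood of) the lattice $[n\bbS]$, or rather couple $h^n$ with $\eta^n$ so that the discrete harmonic part of $\sqrt{\pi/2}\,\eta^n$ and the continuum harmonic average agree closely, using that the covariance kernel of the DGFF is (up to the $\sqrt{\pi/2}$ normalization) the discrete Green's function, which converges to the continuum Green's function with known quantitative error bounds away from the boundary. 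The key estimate is that on $[nU]$ with $\dit(U,\partial\bbS)>0$ the difference $h^n_1(v)-\sqrt{\pi/2}\,\eta^n(v)$ is a centered Gaussian field whose pointwise variance is $O(1/n)$ (or at worst $O(\log\log n/n)$ near problematic scales), with logarithmically decaying correlations; a union bound over the $O(n^2)$ lattice points together with a Gaussian tail bound then gives that the maximum exceeds $\zeta\log n$ with probability decaying faster than any polynomial in $n$. The circle-average regularization is mild here: replacing $h^n(v)$ by $h^n_1(v)$ changes variances and covariances by $O(1)$ amounts that are harmless once multiplied against the $1/n$-scale comparison, and the discrepancy between $h^n_1(v)$ and the "right" discrete harmonic object is what must be controlled via Green's function asymptotics.

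For the distance comparison, I would work deterministically on the event $\cG$ that $\max_{v\in[nU]}|h^n_1(v)-\sqrt{\pi/2}\,\eta^n(v)|\le \zeta'\log n$ for a slightly smaller $\zeta'$, taking $U$ a neighborhood of $P$ with positive distance to $\partial\bbS$. For the lower bound on $D^\xi_{h^n,\LFPP}$: given a near-optimal piecewise $C^1$ path $P$ in $\overline{nP}$ for the continuum LFPP distance from $z$ to $w$, I would discretize it into a nearest-neighbor lattice path in $[nP]$ passing through the lattice points $[P(t_j)]$ at a bounded-density set of times, and compare the discrete sum $\sum_j e^{\xi\sqrt{\pi/2}\,\eta^n(w_j)}$ against the integral. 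The point is that along any unit segment of $P$ the circle average $h^n_1$ varies by at most a slowly growing amount (Hölder/logarithmic modulus of continuity of the circle-average process, which holds with polynomially high probability), so on pieces of $P$ of length $\gtrsim 1$ the integrand $e^{\xi h^n_1}$ is comparable to $e^{\xi h^n_1([P(t_j)])}$, which by $\cG$ is comparable up to $n^{O(\zeta')}$ to $e^{\xi\sqrt{\pi/2}\,\eta^n([P(t_j)])}$; handling pieces of $P$ shorter than $1$ requires a separate but elementary argument bounding the number of lattice cells visited. For the upper bound on $D^\xi_{h^n,\LFPP}$: given a near-optimal lattice path $w_0=[z],\dots,w_k=[w]$ in $[nP]$, I would build a piecewise linear continuum path through the $w_j$ and bound its LFPP length by $\sum_j e^{\xi h^n_1}$ along short segments; again using the modulus of continuity of $h^n_1$ and $\cG$, this is at most $n^{O(\zeta')}\sum_j e^{\xi\sqrt{\pi/2}\,\eta^n(w_j)}$, which up to the same factor is $n^{O(\zeta')} D^\xi_{\eta^n}([z],[w];[nP])$, plus the initial-vertex term $e^{\xi\sqrt{\pi/2}\,\eta^n([z])}$ coming from the endpoint conventions (the continuum distance from $z$ to the lattice point $[z]$ is at most a single step's worth). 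Choosing $\zeta'$ small in terms of $\zeta$ and absorbing all polynomial losses into $n^{\pm\zeta}$, and taking a union bound over a polynomially-dense net of pairs $(z,w)\in nP\times nP$ together with the (logarithmic) continuity of both sides in the endpoints, upgrades "for fixed $z,w$" to "uniformly over all $z,w\in nP$" at the cost of only polynomially high probability.

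The main obstacle is the first part: obtaining a coupling in which the pointwise discrepancy is genuinely $o(\log n)$ with \emph{superpolynomially} small failure probability. This forces quantitative control of the discrete-to-continuum Green's function comparison \emph{uniformly away from the boundary}, including the behavior of the circle-average at lattice scale, and a correlation-decay estimate strong enough to push a union bound over $\sim n^2$ points past any polynomial — a standard Borell–TIS or Gaussian-chaining input will give this provided the variance bound $O(1/n)$ (possibly with a harmless log correction) is in hand. The modulus-of-continuity inputs for the circle-average process and the path-discretization bookkeeping in the second part are comparatively routine, though the short-segment case in the lower bound needs a little care to avoid losing more than a polynomial factor.
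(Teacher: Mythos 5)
There is a genuine gap in your first part: you never actually specify the coupling, and the key quantitative claim you hang the argument on is wrong. The paper's coupling is the explicit one of Sheffield (\cite[Section 4.3]{shef-gff}): $\sqrt{\pi/2}\,\eta^n$ is the projection of $h^n$ onto the space of piecewise affine functions, so that $\sqrt{\pi/2}\,\eta^n$ and $h^n-\sqrt{\pi/2}\,\eta^n$ are \emph{independent} (orthogonality in the Dirichlet inner product). This independence is what makes the variance of the discrepancy computable: $\Var\bigl(h^n_1(v)-\sqrt{\pi/2}\,\eta^n_1(v)\bigr)=\Var h^n_1(v)-\tfrac{\pi}{2}\Var\eta^n_1(v)$, and both terms equal $\log n+O_U(1)$ (circle-average variance asymptotics and the discrete Green's function asymptotics of Lemma~\ref{lem_discrete_green_approx}), giving an $O_U(1)$ bound; an $O(1)$ interpolation term handles $\eta^n_1(v)$ versus $\eta^n(v)$. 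Your proposed route --- ``condition $h^n$ on its values along the lattice'' (ill-defined pointwise) or ``match harmonic parts,'' then invoke convergence of the discrete to the continuum Green's function --- does not by itself control $\Var\bigl(h^n_1(v)-\sqrt{\pi/2}\,\eta^n(v)\bigr)$, because that variance depends on the \emph{cross-covariance} under the joint coupling, which your sketch never computes; Green's function convergence only says the two marginal covariance structures are close at macroscopic separation. Moreover, your target estimate $O(1/n)$ for the pointwise variance is false: under the projection coupling (and one should not expect better from any reasonable coupling) the discrepancy between the radius-$1$ circle average and a lattice-scale average of the field has variance of order $1$, since the two unit-scale mollifications differ by the sub-unit-scale fluctuations of the GFF. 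Fortunately $O(1)$ (indeed anything $o(\log n)$) variance suffices: a plain union bound over the $O(n^2)$ points with threshold $\zeta\log n$ already gives superpolynomial decay, which is essentially how the paper concludes (via an expectation bound plus Gaussian concentration). So your strategy survives only after you replace the vague coupling by a concrete one with an exploitable independence/orthogonality structure and correct the variance claim.

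On the second part, your route genuinely differs from the paper's: the paper does not re-derive the comparison between continuum LFPP and lattice LFPP with weights $e^{\xi h^n_1}$, but imports it wholesale from \cite[Proposition 3.16]{dg-lqg-dim} (after rescaling $\delta=1/n$ and minor bookkeeping with the endpoint term and the rectilinear polygon $P$), and then swaps $h^n_1$ for $\sqrt{\pi/2}\,\eta^n$ using the first part at the cost of $n^{\zeta}$. Your plan to reprove this by discretizing near-optimal paths, using a logarithmic modulus of continuity for $h^n_1$ at unit scale with polynomially high probability, plus an endpoint correction $e^{\xi\sqrt{\pi/2}\,\eta^n([z])}$, is the right shape and could be made self-contained, but as written it leaves the substantive work (uniform continuity estimate for the circle-average field, the short-segment/boundary-of-$P$ bookkeeping, and uniformity in $z,w$) at the level of assertions; note also that the uniformity over all $z,w\in nP$ comes for free once the path-comparison is proved uniformly over paths, so the net-plus-continuity step you describe is not needed if you follow the cited argument.
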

The term $e^{\xi \sqrt{\frac\pi2} \eta^n([z])}$ in the upper bound takes care of the edge case where $[z] = [w]$ but $z \neq w$ (so $D^\xi_{\eta^n} ([z],[w];[nP]) = 0$ but $D^\xi_{h^n, \LFPP} (z,w;nP) > 0$). Also, the condition of $P$ being a rectilinear polygon can be weakened, but we prefer to avoid worrying about how the boundary $\partial P$ interacts with the lattice approximation.

Roughly speaking, Theorem~\ref{thm_coupling} tells us that DLFPP and continuum LFPP distances are close. Consequently, since \cite[Theorem 1.5]{dg-lqg-dim} gives the $\xi$-continuum LFPP distance exponent for $\xi = \frac\gamma{d_\gamma}$ in terms of the $\gamma$-LQG fractal dimension $d_\gamma$ (defined in \cite{dg-lqg-dim}), with a little effort we can obtain the same distance exponent for $\xi$-DLFPP, 
proving a conjecture of \cite[Section 1.5]{dg-lqg-dim}.

\begin{theorem}[DLFPP distance exponent]\label{thm_fractal_dimension}
Let $\gamma \in (0,2)$, and let $\xi = \frac{\gamma}{d_\gamma}$. Then for any distinct $z, w$ in the interior of $\bbS$, with probability tending to 1 as $n \to \infty$ we have
\eqb\label{eq_ptp}
D^{\xi}_{\eta^n} ([nz], [nw]) = n^{\frac{2}{d_\gamma} + \frac{\gamma^2}{2d_\gamma} + o(1)}.
\eqe
Furthermore, for any open $U \subset \bbS$ with $\dit(U, \partial \bbS) >0$ and compact $K \subset U$, with probability tending to 1 as $n \to \infty$ we have 
\eqb\nonumber
\max_{u,v \in [nK]} D^{\xi}_{\eta^n} (u,v; [nU]) = n^{\frac{2}{d_\gamma} + \frac{\gamma^2}{2d_\gamma} + o(1)} \quad \text{and} \quad D^{\xi }_{\eta^n} ([nK], [n\partial U]) =  n^{\frac{2}{d_\gamma} + \frac{\gamma^2}{2d_\gamma} + o(1)}.
\eqe
\end{theorem}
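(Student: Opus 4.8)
The plan is to derive Theorem~\ref{thm_fractal_dimension} from the coupling of Theorem~\ref{thm_coupling} together with the continuum LFPP distance exponent of \cite[Theorem 1.5]{dg-lqg-dim}. Write $\alpha := \frac{2}{d_\gamma} + \frac{\gamma^2}{2 d_\gamma}$ for the target exponent. First I record the continuum inputs: by Remark~\ref{rem_LFPP_defn} and \cite[Theorem 1.5]{dg-lqg-dim} (with the Euclidean-ball-crossing and diameter exponents established there), for $\xi = \gamma/d_\gamma$ and with probability tending to $1$ as $n \to \infty$, (a) for any fixed rectilinear polygon $Q$ with $\dit(Q,\partial\bbS) > 0$ and any compact $K \subset Q$, $\max_{z,w \in nK} D^\xi_{h^n,\LFPP}(z,w;nQ) \le n^{\alpha + o(1)}$, and (b) for any fixed rectilinear polygon $Q$ with $\dit(Q,\partial\bbS)>0$ and fixed point $z_0$ in its interior, $D^\xi_{h^n,\LFPP}(nz_0, \partial(nQ); nQ) \ge n^{\alpha - o(1)}$, since a continuum LFPP path from $nz_0$ to $\partial(nQ)$ must cross a fixed Euclidean annulus, whose LFPP crossing distance is $n^{\alpha - o(1)}$.

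Two elementary facts are also needed. First, for $v$ in the bulk of $[n\bbS]$ the variable $\sqrt{\pi/2}\,\eta^n(v)$ is centered Gaussian of variance $\log n + O(1)$, so a union bound over the $O(n^2)$ lattice points of a bulk set gives $\max_v e^{\xi\sqrt{\pi/2}\,\eta^n(v)} \le n^{2\xi + o(1)}$ with probability tending to $1$, and for a single fixed bulk point $v = [nz]$ one has the sharper $e^{\xi\sqrt{\pi/2}\,\eta^n([nz])} = n^{o(1)}$ with probability tending to $1$. Second, $\alpha - 2\xi = \frac{2 + \gamma^2/2 - 2\gamma}{d_\gamma} = \frac{(\gamma - 2)^2}{2 d_\gamma} > 0$ for $\gamma \in (0,2)$, so the additive term $e^{\xi\sqrt{\pi/2}\,\eta^n([z])}$ in the upper bound of Theorem~\ref{thm_coupling} is always of strictly smaller polynomial order than $n^\alpha$, hence negligible below.

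Now fix $\zeta > 0$ and work on the (probability $\to 1$) intersection of the good events above. For each \emph{upper} bound in Theorem~\ref{thm_fractal_dimension} I pick a bulk rectilinear polygon $Q$ with $\dit(Q,\partial\bbS)>0$ containing the relevant compact set or pair of points, restrict competitor paths to $[nQ] \subseteq [n\bbS]$, apply the left inequality of Theorem~\ref{thm_coupling} with $P = Q$, and bound the result by the continuum diameter of $nQ$ via (a): this gives $D^\xi_{\eta^n}([nz],[nw]) \le D^\xi_{\eta^n}([nz],[nw];[nQ]) \le n^\zeta D^\xi_{h^n,\LFPP}(nz,nw;nQ) \le n^{\alpha+o(1)}$, and similarly $\max_{u,v\in[nK]} D^\xi_{\eta^n}(u,v;[nU]) \le \max_{u,v\in[nK]} D^\xi_{\eta^n}(u,v;[nQ]) \le n^{\alpha+o(1)}$ (taking $K \subset Q \subset U$) and $D^\xi_{\eta^n}([nK],[n\partial U]) \le n^{\alpha+o(1)}$ (note $\partial U$ lies in the bulk because $\dit(\ol U,\partial\bbS) = \dit(U,\partial\bbS) > 0$, so a single bulk polygon $Q$ contains a chosen point of $K$ together with a chosen point of $\partial U$). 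For each \emph{lower} bound I use a crossing argument: pick a small bulk rectilinear polygon $Q$ containing the relevant source point $z_0$ in its interior but not the corresponding endpoint; then any competitor path must leave $[nQ]$, hence contains a subpath inside $[nQ]$ from $[nz_0]$ to the inner lattice boundary $\partial_{\lattice}[nQ]$, so its weight is at least $D^\xi_{\eta^n}([nz_0], \partial_{\lattice}[nQ]; [nQ])$; minimizing the right inequality of Theorem~\ref{thm_coupling} ($P = Q$) over the endpoint on $\partial_{\lattice}[nQ]$ and using (b) together with the negligibility of the additive Gaussian term yields the matching lower bound $n^{\alpha - o(1)}$. (For $D^\xi_{\eta^n}([nK],[n\partial U])$ one runs this crossing argument uniformly over all starting points in $[nK]$, using $n^{2\xi+o(1)}$ for the additive term; for \eqref{eq_ptp} and for the diameter lower bound the starting point is a single fixed bulk point, with a second fixed point playing the role of endpoint, so the additive term is even $n^{o(1)}$.)

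Finally, to upgrade "for each $\zeta > 0$, the relevant quantity lies in $[n^{\alpha - \zeta}, n^{\alpha + \zeta}]$ with probability tending to $1$" to the stated $n^{\alpha + o(1)}$ form, I run the above along a sequence $\zeta_k \downarrow 0$ and pass to a diagonal subsequence in the usual way. Given Theorem~\ref{thm_coupling} and the inputs from \cite{dg-lqg-dim}, no step is genuinely hard; the only real chore is the lattice bookkeeping in the crossing argument — matching $\partial_{\lattice}[nQ]$ with $\partial(nQ)$ up to an $O(1)$-scale error, which costs only an $n^{o(1)}$ factor since radius-$1$ circle averages, and hence continuum LFPP distances, change by at most $n^{o(1)}$ over such scales — precisely the sort of estimate the rectilinear-polygon hypothesis of Theorem~\ref{thm_coupling} is meant to make routine.
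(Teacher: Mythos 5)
Your proposal is correct and follows essentially the same route as the paper: both reduce the theorem to the coupling of Theorem~\ref{thm_coupling} together with the continuum LFPP exponent of \cite[Lemma 2.1, Theorem 1.5]{dg-lqg-dim}, and both dispose of the additive term $e^{\xi\sqrt{\pi/2}\,\eta^n([z])}$ by bounding the bulk maximum of the field by $(2+o(1))\log n$ and using $\frac{2}{d_\gamma}+\frac{\gamma^2}{2d_\gamma} > 2\xi$. The only differences are cosmetic: you obtain the field-maximum bound by a direct Gaussian union bound where the paper cites \cite{bolthausen2001}, and you organize the lower bounds via small polygons around the source point (a crossing argument) where the paper uses one large polygon $P\supset U$ in its Lemma~\ref{lem_apprx_bounds} and then deduces the remaining bounds from the point-to-point estimate.
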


For small $\xi$, the paper \cite{DD19} establishes the existence of a subsequential $\xi$-DLFPP scaling limit as $n \to \infty$. Writing $\xi = \frac\gamma{d_\gamma}$, Theorem~\ref{thm_fractal_dimension} gives the exponential order of the distance normalization factors in terms of $d_\gamma$; namely, one should rescale distances by $n^{-\frac{2}{d_\gamma} - \frac{\gamma^2}{2d_\gamma} + o(1)}$. 

\cite[Theorem 1.2, Proposition 1.7]{dg-lqg-dim} tell us that $\gamma \mapsto \frac{\gamma}{d_\gamma}$ is continuous and increasing. Thus Theorem~\ref{thm_fractal_dimension} discusses the DLFPP distance exponent for $\xi \in (0,\frac2{d_2})$. To formulate things in full generality, we define the DLFPP distance exponent for all $\xi>0$.
\begin{definition}\label{def_distance_exponent}
Let $S_1 \subset S_2$ be squares with the same center as $\bbS$, and side lengths $\frac13$ and $\frac23$ respectively. For $\xi > 0$, define the \emph{$\xi$-DLFPP distance exponent} $\lambda(\xi)$ via
\[\lambda(\xi) = \sup \left\{ \alpha \: : \: \lim_{n \to \infty} \P\left[ D^\xi_{\eta^n}([nS_1], [n \partial S_2] ) < n^{1-\alpha} \right] = 1 \right\}.\]
\end{definition}
\begin{remark}\label{rem_lambda}
Our definition of $\lambda(\xi)$ is chosen to align with that of the $\xi$-continuum LFPP distance exponent defined in \cite[Equation (1.4)]{GP-LFPP}. By Theorem~\ref{thm_fractal_dimension} and \cite[Theorem 1.5]{dg-lqg-dim} these exponents agree for $\xi \in (0,\frac2{d_2})$. More strongly we expect that these two distance exponents agree for \emph{all} $\xi$, though this is not proved here. We note that the exponent bounds \cite[Theorem 2.3]{GP-LFPP} are applicable to our $\lambda(\xi)$; their proofs carry over without modification. 
\end{remark}

Using a result on the DGFF level set percolation (\cite{ding-li-chem-dist}, see also \cite{ding-wirth-level-perc}), we can easily establish a lower bound for $\lambda(\xi)$.
\begin{theorem}\label{thm_lambda_lower}
We have $\lambda(\xi) \geq 0$ for all $\xi >0$. 
\end{theorem}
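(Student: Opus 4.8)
The plan is to produce, with probability tending to $1$, a single lattice path from $[nS_1]$ to $[n\partial S_2]$ whose $\xi$-DLFPP weight is at most $n^{1+o(1)}$, where $S_1 \subset S_2$ are the squares of Definition~\ref{def_distance_exponent}. Since $n^{1+o(1)} < n^{1-\alpha}$ for all large $n$ whenever $\alpha < 0$, this shows $\lim_{n\to\infty}\P[D^\xi_{\eta^n}([nS_1],[n\partial S_2]) < n^{1-\alpha}] = 1$ for every $\alpha < 0$, and hence $\lambda(\xi) \geq 0$. No upper bound on $\xi$ will enter, so this holds for all $\xi > 0$.

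To produce the path I would invoke the chemical-distance bound for DGFF level-set percolation of \cite{ding-li-chem-dist}. Using the symmetry $\eta^n \eqD -\eta^n$, that result supplies a constant $\lambda_0 \geq 0$ — depending only on the normalization conventions, not on $\xi$ or $n$, and in fact one may take $\lambda_0 = 0$ (the sign clusters) — such that with probability $1 - o(1)$ the sublevel set $\{v \in [n\bbS] : \eta^n(v) \leq \lambda_0\}$ contains a path $\pi = (w_0,\dots,w_k)$ crossing the annular region $nS_2 \setminus nS_1$, joining a vertex of $[nS_1]$ to a vertex of $[n\partial S_2]$, and having at most $n^{1+o(1)}$ vertices. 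Here I use that $\dit(S_1, \partial S_2) > 0$ and $\dit(S_2, \partial\bbS) > 0$, so $nS_2 \setminus nS_1$ is a macroscopic annulus at macroscopic distance from $\partial(n\bbS)$, to which the crossing estimates of \cite{ding-li-chem-dist} apply (if convenient, one first uses the domain Markov property of the DGFF to replace the zero-boundary field $\eta^n$ on $[n\bbS]$ by the corresponding field on a sub-box surrounding $nS_2$, absorbing the bounded harmonic correction into $\lambda_0$). All that is actually needed is $\lambda_0 = o(\log n)$, so even a level growing like $\sqrt{\log n}$ would do.

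Given such a $\pi$, Definition~\ref{def_model} yields
\[ D^\xi_{\eta^n}([nS_1],[n\partial S_2]) \ \leq\ \sum_{j=0}^{k} e^{\xi \sqrt{\pi/2}\, \eta^n(w_j)} \ \leq\ (k+1)\, e^{\xi\sqrt{\pi/2}\,\lambda_0} \ \leq\ n^{1+o(1)}, \]
since $k+1 \leq n^{1+o(1)}$ and $e^{\xi\sqrt{\pi/2}\,\lambda_0} = n^{o(1)}$; combined with the first paragraph this gives $\lambda(\xi) \geq 0$.

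I expect the only step requiring genuine care to be the appeal to \cite{ding-li-chem-dist}: extracting from that work a bounded-level path of length $n^{1+o(1)}$ crossing the specific macroscopic annulus $nS_2\setminus nS_1$ for the zero-boundary DGFF on the larger box $[n\bbS]$, rather than in whatever domain, boundary condition, or annulus is stated there. This should be routine given the robustness of those estimates — in particular their insensitivity to bounded shifts of the level and to $n^{o(1)}$-scale changes of the geometry — after which the rest is the one-line weight bound above.
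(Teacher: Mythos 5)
Your proposal is correct and is essentially the paper's own proof: both invoke \cite[Theorem 1]{ding-li-chem-dist} to obtain, with probability tending to $1$, a path from $[nS_1]$ to $[n\partial S_2]$ with $n^{1+o(1)}$ vertices along which the field is $o(\log n)$, and then bound $D^\xi_{\eta^n}([nS_1],[n\partial S_2])$ by the number of vertices times the maximal vertex weight, giving $n^{1+o(1)}$ and hence $\lambda(\xi)\geq 0$. The only caveat is your parenthetical claim that one may take $\lambda_0=0$ (sign clusters), which is stronger than what that theorem gives --- it is stated for a slowly growing level of order $(\log n)^\chi$ with $\chi\in(\tfrac12,1)$ --- but since you explicitly only need a level that is $o(\log n)$, this does not affect the argument, and it matches the paper's choice exactly.
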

\begin{proof}
Consider a DGFF $\eta^n$ on $[n\bbS]$, and fix any $\chi \in (\frac12, 1)$. Then \cite[Theorem 1]{ding-li-chem-dist} tells us that with probability tending to 1 as $n \to \infty$, there exists a path from $[nS_1]$ to $[n \partial  S_2]$ passing through at most $ne^{(\log n)^\chi}$ vertices, such that $\eta^n$ is at most $(\log n)^\chi$ uniformly along the path. As a result, with probability approaching 1 as $n \to \infty$, we have
\[D^\xi_{\eta^n}([nS_1], [n \partial S_2]) \leq ne^{(\log n)^\chi} e^{\xi \sqrt{\frac\pi2} (\log n)^\chi}= n^{1 + o(1)}.\]
\end{proof}
Theorem~\ref{thm_lambda_lower} is an improvement over previous lower bounds\footnote{See Remark~\ref{rem_lambda}.} for $\lambda(\xi)$ for the regime $\xi \in (\xi_0,0.266) \cup(0.708,\infty)$ (where $\xi_0>0$ is small and nonexplicit). Note that for $\xi \in (\xi_0, 0.241)$, the previous best lower bound \cite[Theorem 1.6]{ghs-map-dist} was obtained by working with mated-CRT maps and considering the ``LQG length'' of a deterministic Euclidean path via a KPZ relation \cite{shef-kpz}. Thus in some sense our result shows that for this range of $\xi$, deterministic Euclidean paths do not have low $\xi$-DLFPP lengths. 

For $\xi \in (0.267,0.707)$, stronger lower bounds were proved in \cite{dg-lqg-dim,GP-LFPP}, and for $\xi \in (0,\xi_0)$, \cite{ding-goswami-watabiki} gives $\lambda(\xi) \geq \Omega(\xi^{4/3} /\log (1/\xi))$. 

For $\xi = \frac{\gamma}{d_\gamma}$, Theorems~\ref{thm_fractal_dimension} and \ref{thm_lambda_lower} immediately yield the following lower bound for $d_\gamma$.

\begin{theorem} \label{thm_lower_bound_dgamma}
For $\gamma \in (0,2)$, the fractal dimension of $\gamma$-LQG $d_\gamma$ satisfies
\[d_\gamma \geq 2 + \frac{\gamma^2}{2}. \]
\end{theorem}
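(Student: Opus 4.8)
The plan is to simply combine Theorem~\ref{thm_fractal_dimension} with Theorem~\ref{thm_lambda_lower}, evaluated at the single parameter $\xi = \frac{\gamma}{d_\gamma}$ for a fixed $\gamma \in (0,2)$. First I would fix such a $\gamma$ and set $\xi = \frac{\gamma}{d_\gamma}$, noting that $\xi > 0$ because $d_\gamma \in (0,\infty)$, so both theorems apply. Let $S_1 \subset S_2$ be the concentric squares of side $\frac13$ and $\frac23$ from Definition~\ref{def_distance_exponent}, and take $U = \mathrm{int}(S_2)$ and $K = S_1$; then $U$ is open with $\dit(U,\partial\bbS) = \frac16 > 0$, $K$ is compact with $K \subset U$, and $[n\partial U] = [n\partial S_2]$. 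The second assertion of Theorem~\ref{thm_fractal_dimension} then gives, with probability tending to $1$ as $n \to \infty$,
\[
D^\xi_{\eta^n}([nS_1],[n\partial S_2]) = n^{\frac{2}{d_\gamma} + \frac{\gamma^2}{2d_\gamma} + o(1)} .
\]

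On the other hand, the proof of Theorem~\ref{thm_lambda_lower} (via the DGFF level-set percolation input \cite{ding-li-chem-dist}) shows that with probability tending to $1$ we have $D^\xi_{\eta^n}([nS_1],[n\partial S_2]) \leq n^{1+o(1)}$; equivalently, $\lambda(\xi) \geq 0$ while Theorem~\ref{thm_fractal_dimension} identifies $\lambda(\xi) = 1 - \frac{2}{d_\gamma} - \frac{\gamma^2}{2d_\gamma}$. Since both events have probability tending to $1$, they occur simultaneously with probability tending to $1$, and on their intersection $n^{\frac{2}{d_\gamma} + \frac{\gamma^2}{2d_\gamma} + o(1)} \leq n^{1+o(1)}$. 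Letting $n \to \infty$ forces $\frac{2}{d_\gamma} + \frac{\gamma^2}{2d_\gamma} \leq 1$, and rearranging (using $d_\gamma > 0$) yields $d_\gamma \geq 2 + \frac{\gamma^2}{2}$.

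Since the final step is purely a comparison of two exponents, there is essentially no technical obstacle here: all the real work is upstream, in Theorem~\ref{thm_fractal_dimension} (which rests on the coupling Theorem~\ref{thm_coupling} and the continuum LFPP exponent of \cite[Theorem 1.5]{dg-lqg-dim}) and in Theorem~\ref{thm_lambda_lower}. The only points needing a line of care are the bookkeeping that turns the definition of $\lambda(\xi)$ into the clean bound $D^\xi_{\eta^n}([nS_1],[n\partial S_2]) \leq n^{1+o(1)}$ — which is exactly what the displayed computation in the proof of Theorem~\ref{thm_lambda_lower} provides — and verifying that $S_2$ is an admissible choice of domain $U$ in Theorem~\ref{thm_fractal_dimension}, i.e.\ that $\dit(S_2,\partial\bbS)>0$ and $S_1 \subset \mathrm{int}(S_2)$, both of which are immediate from the side lengths $\tfrac13$ and $\tfrac23$.
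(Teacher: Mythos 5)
Your proposal is correct and follows essentially the same route as the paper: the paper's proof likewise combines Theorem~\ref{thm_fractal_dimension} (which, applied to the annulus $S_1 \subset S_2$ of Definition~\ref{def_distance_exponent}, identifies $\lambda(\gamma/d_\gamma) = 1 - \frac{1}{d_\gamma}(2 + \frac{\gamma^2}{2})$) with Theorem~\ref{thm_lambda_lower} ($\lambda(\xi) \geq 0$) and rearranges. You merely unpack the $\lambda$ bookkeeping into an explicit comparison of exponents on the intersection of two high-probability events, which is a valid and equivalent presentation.
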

\begin{proof}
By Theorem~\ref{thm_fractal_dimension} we see that $\lambda(\gamma/d_\gamma) = 1 - \frac{1}{d_\gamma} \left( 2 + \frac{\gamma^2}2\right)$. Applying Theorem~\ref{thm_lambda_lower} yields the result. 
\end{proof}
By \cite[Theorems 1.4, 1.5, 1.6]{dg-lqg-dim}, this yields a bound for each of the $\gamma$-LQG discretizations discussed in \cite{dg-lqg-dim}, including the mated-CRT map, Liouville graph distance, and continuum LFPP.
As before, Theorem~\ref{thm_lower_bound_dgamma} is the best known lower bound for the regime $\gamma \in (\gamma_0,0.576)$, where $\gamma_0$ is small and non-explicit. The best known lower bounds for $\gamma \in (0.577, 2)$ are proved in \cite{dg-lqg-dim, GP-LFPP}, and the best bound for $\gamma \in (0,\gamma_0)$ is shown in \cite{ding-goswami-watabiki}.

Finally, we briefly comment on the Euclidean length exponent of the $\xi$-DLFPP annulus crossing geodesic (see Definition~\ref{def_distance_exponent} for the definition of the annulus).
\begin{remark}
For $\xi\in (0.267, 0.707)$ we have the bound $\lambda(\xi) > 0$ (\cite[Theorem 2.3]{GP-LFPP}, see Remark~\ref{rem_lambda}), so by \cite[Theorem 1.2, Remark 1.3]{dingzhang-geodesic} we see that with probability approaching 1 as $n \to \infty$, the (a.s. unique) annulus crossing DLFPP geodesic passes through at least $n^{1+\alpha}$ vertices for some $\alpha = \alpha(\xi) > 0$. That is, the Euclidean length exponent of the annulus crossing geodesic is strictly greater than 1. Also, for general $\xi$, \cite[Theorem 2.6]{GP-LFPP} gives an upper bound on the Euclidean length exponent of the DLFPP annulus crossing geodesic; their proof carries over to our setting with minor modification. 
\end{remark}

In Section~\ref{section_prelim}, we cover the necessary preliminaries. In Section~\ref{subsection_discrepancy} we prove the first part of Theorem~\ref{thm_coupling}, and in Section~\ref{subsection_DLFPP_vs_LFPP} we prove the second part of Theorem~\ref{thm_coupling}. Finally in Section~\ref{subsection_distance_exponent} we prove Theorem~\ref{thm_fractal_dimension}.
\section{Preliminaries}\label{section_prelim}
\subsection{Notation}\label{subsection-notation}
In this paper, we write $O(1)$ to denote some quantity that remains bounded as $n \to \infty$, and $o(1)$ for some quantity that goes to zero as $n \to \infty$. For any parameter $x$ we also write $O_x(1)$ to denote a quantity bounded in terms of $x$ as $n \to \infty$ while $x$ stays fixed. 

We say an event $A_n$ occurs with polynomially high probability as $n\to\infty$ if there is some positive constant $C>0$ such that $1-\P[A_n] \leq n^{-C}$ for all large $n$. Similarly, we say $A_n$ occurs with superpolynomially high probability if for all $C > 0$ we have $1-\P[A_n] \leq n^{-C}$ for sufficiently large $n$ in terms of $C$. 

\subsection{Discrete Gaussian free field}\label{subsection_DGFF}
For a set of vertices $V \subset \Z^2$, let $\partial V\subset V$ be the vertices having at least one neighbor outside $V$. The \emph{discrete Green function} $G^{V}(u,v)$ is the expected number of visits to $v$ of a simple random walk on $\Z^2$ started at $u\in V$ and killed upon reaching $\partial V$. 
The \emph{zero boundary DGFF} $\eta^n: [n\bbS] \to \R$ is a mean zero Gaussian process indexed by $[n\bbS]$ with covariances given by $\E[\eta^n(u)\eta^n(v)] = G^{[n\bbS]}(u,v)$. 
In particular, since $G^{[n\bbS]}(v,v) = 0$ whenever $v \in \partial [n\bbS]$, we have $\eta|_{\partial [n\bbS]} \equiv 0$. 

In our subsequent analysis, we will need the following DGFF local covariance estimate.
\begin{lemma}\label{lem_discrete_green_approx}
Let $U \subset \bbS$ be an open set satisfying $\dit(U, \partial \bbS)>0$. Then for fixed $k > 0$, for all $u,v \in [nU]$ with $|u - v|\leq k$ we have
\[ \E[\eta^n(u)\eta^n(v)]= \frac{2}{\pi}\log n + O_{U,k}(1),\]
where the term $O_{U,k}(1)$ is uniformly bounded for all $n,u, v$. 
\end{lemma}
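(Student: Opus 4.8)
The plan is to derive this from the classical fact that the discrete Green function on a lattice domain is well approximated by the continuum Green function, combined with the explicit form of the continuum Green function on $n\bbS$ and the logarithmic singularity of its diagonal. Concretely, write $G^{[n\bbS]}(u,v) = \frac{\pi}{2}\left( -\frac{1}{2\pi}\log|u-v| + \text{(harmonic correction)}\right) + (\text{error})$; here the factor $\frac{\pi}{2}$ reconciles the normalization of the simple-random-walk Green function with the continuum Green function $G_{n\bbS}(z,w) = -\log|z-w| + \tilde G(z,w)$ where $\tilde G$ is the bounded harmonic-in-each-variable correction term. The key input is the Green function comparison estimate of the form $G^{[n\bbS]}(u,v) = \frac{1}{2\pi}\mathfrak{a}(u-v)\cdot(\text{something}) + \dots$, or more usefully the statement (going back to work on the DGFF, e.g. the estimates used in Bramson--Ding--Zeitouni or the lecture notes on the DGFF) that for $u,v$ in the bulk of $[n\bbS]$,
\[
\E[\eta^n(u)\eta^n(v)] = -\log|u-v| + \text{(bounded in the bulk)} + o(1),
\]
where I am absorbing the factor $\frac{\pi}{2}\cdot\frac{1}{\pi} = \frac12$ versus $\frac{2}{\pi}$ appropriately — one must be careful about which normalization of the DGFF is in play (the covariance here is exactly the SRW Green function $G^{[n\bbS]}$, with no extra constant).

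First I would recall the potential kernel asymptotics: the recurrent potential kernel $a(x)$ of simple random walk on $\Z^2$ satisfies $a(x) = \frac{2}{\pi}\log|x| + \kappa + O(|x|^{-2})$ as $|x|\to\infty$, for an explicit constant $\kappa$. Second, I would use the representation of the killed Green function via the potential kernel and the harmonic measure: $G^{[n\bbS]}(u,v) = -a(u-v) + \E_u[a(S_{\tau} - v)]$, where $\tau$ is the exit time of $[n\bbS]$ and $S_\tau \in \partial[n\bbS]$. Third, for $u,v \in [nU]$ with $|u-v|\le k$ fixed, the first term is $-\frac{2}{\pi}\log|u-v| - \kappa + O(1) = O_k(1)$ since $|u-v|$ is bounded (and $\ge 1$). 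The second term $\E_u[a(S_\tau - v)]$ is the interesting one: since $\dit(U,\partial\bbS) > 0$, the exit point $S_\tau$ is at distance $\gtrsim n$ from $u$, so $|S_\tau - v| \asymp n$ and $a(S_\tau - v) = \frac{2}{\pi}\log|S_\tau - v| + \kappa + O(n^{-2}) = \frac{2}{\pi}\log n + \frac{2}{\pi}\log\frac{|S_\tau-v|}{n} + \kappa + O(n^{-2})$; taking expectation, $\E_u[a(S_\tau - v)] = \frac{2}{\pi}\log n + O_U(1)$, where the $O_U(1)$ absorbs $\kappa$ and the bounded quantity $\frac{2}{\pi}\E_u[\log(|S_\tau - v|/n)]$ — bounded because $|S_\tau - v|/n$ stays bounded above (diameter of $\bbS$) and below (here one uses $\dit(U,\partial\bbS)>0$ to keep it away from $0$, uniformly in $u,v\in[nU]$, $n$). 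Combining, $G^{[n\bbS]}(u,v) = \frac{2}{\pi}\log n + O_{U,k}(1)$.

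The main obstacle — really the only subtle point — is making the $O_{U,k}(1)$ genuinely uniform in $n$, $u$, and $v$, and in particular controlling $\E_u[\log(|S_\tau - v|/n)]$ from both sides: the upper bound is trivial (bounded domain), but the lower bound requires that the walk, started in the bulk $[nU]$, cannot exit too close to $v$; since $v$ is also in the bulk and $\dit(U,\partial\bbS)>0$, we have $\dit(v, \partial[n\bbS]) \gtrsim n$ while $S_\tau \in \partial[n\bbS]$, so $|S_\tau - v|\gtrsim n$ deterministically, giving the uniform lower bound directly with no probabilistic input needed. A secondary bookkeeping point is the error term in the potential-kernel asymptotics when $|x|\asymp n$: this is $O(|x|^{-2}) = O(n^{-2})$, harmless after taking expectation. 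I would also remark that an alternative and perhaps cleaner route is to cite directly a known DGFF covariance estimate in the bulk (e.g. from the literature on the DGFF maximum, where exactly such estimates — $\E[\eta^n(u)\eta^n(v)] = \frac{2}{\pi}\log n + O(1)$ for bulk points at bounded distance — are standard), but I would present the potential-kernel derivation since it is self-contained and transparently yields the claimed uniformity.
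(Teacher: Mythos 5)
Your proof is correct and rests on the same two classical inputs as the paper's proof (the potential-kernel representation of the killed Green function and the asymptotics $a(x)=\frac{2}{\pi}\log|x|+\kappa+O(|x|^{-2})$, i.e.\ exactly the results of Lawler--Limic that the paper cites); the only difference is that you apply the representation directly on $[n\bbS]$, using $\dit(U,\partial\bbS)>0$ to get the deterministic two-sided bound $c_U\, n\le |S_\tau-v|\le 2n$ on the exit point, whereas the paper first sandwiches $G^{[n\bbS]}(u,v)$ between the Green functions of lattice balls $[B_{nr}(u)]\subset[n\bbS]\subset[B_{nR}(u)]$ via domain monotonicity and then invokes the same standard estimates. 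This is essentially the same argument, and your handling of the uniformity in $n,u,v$ is sound.
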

\begin{proof}
By definition we need to show $ G^{[n\bbS]}(u,v) = \frac{2}{\pi}\log n + O_{U,k}(1)$. There exist $0<r<R$ such that for every point $z \in U$ we have $B_{r}(z) \subset \bbS \subset B_{R}(z)$, and by the domain monotonicity of the Green function we have $G^{[B_{nr}(u)]}(u,v) \leq G^{[n\bbS]} (u,v) \leq G^{[B_{nR}(u)]}(u,v)$. Using standard properties of the Green function (see, e.g., \cite[Theorem 4.4.4, Proposition 4.6.2]{lawler-limic-walks}), each of $G^{[B_{nr}(u)]}(u,v)$ and $G^{[B_{nR}(u)]}(u,v)$ is given by $\frac2\pi \log n + O_{r,R,k}(1)$, so we are done. 
\end{proof}

\subsection{DGFF as a projection}\label{subsection_proj}
The following lemma from \cite[Section 4.3]{shef-gff} relates the DGFF and continuum GFF. For $n \in \N$, the lattice $\Z^2$ divides the square $n \bbS$ into $n^2$ unit squares. Cutting each of these unit squares along its down-right diagonal gives us a triangulation of $n\bbS$; let $H^n$ be the (finite dimensional) space of continuous functions on $n\bbS$ which are affine on each triangle and vanish on $\partial (n\bbS)$. 
\begin{lemma} \label{lem_proj}
Suppose $h^n$ is a (continuum) zero boundary GFF on $\bbS$, and let $\sqrt{\frac\pi2} \eta^n$ be the projection of $h^n$ to $H^n$. Then $\eta^n$ restricted to $[n\bbS]$ has the law of a zero boundary DGFF on $[n\bbS]$. 
\end{lemma}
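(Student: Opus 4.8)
The plan is to realize $\sqrt{\pi/2}\,\eta^n$ as the image of $h^n$ under an orthogonal projection onto a finite-dimensional piece of the Cameron--Martin space, so that $\eta^n|_{[n\bbS]}$ is manifestly a centered Gaussian field, and then to match its covariance with the discrete Green function $G^{[n\bbS]}$. Recall that $h^n$ is the standard Gaussian associated to $H_0^1(n\bbS)$ with the Dirichlet inner product $(f,g)_\nabla = \frac{1}{2\pi}\int_{n\bbS}\nabla f\cdot\nabla g$ (this is the normalization under which circle averages of $h^n$ at unit radius have variance $\log n + O(1)$, matching Lemma~\ref{lem_discrete_green_approx}), so that the pairings $(h^n,f)_\nabla$, $f\in H_0^1(n\bbS)$, form a centered Gaussian process with $\E[(h^n,f)_\nabla(h^n,g)_\nabla] = (f,g)_\nabla$. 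Since $H^n\subset H_0^1(n\bbS)$ is finite-dimensional, the projection $P_{H^n}h^n$ is well defined, and I take this to be the definition of $\eta^n$, i.e.\ $P_{H^n}h^n = \sqrt{\pi/2}\,\eta^n$. Expanding in the basis $\{\psi_w\}$ of tent functions (piecewise affine, $\psi_w(w)=1$ and $\psi_w=0$ at every other vertex, $w$ ranging over the interior vertices of $[n\bbS]$) gives $\sqrt{\pi/2}\,\eta^n = \sum_w \sqrt{\pi/2}\,\eta^n(w)\,\psi_w$, a piecewise-affine function on $n\bbS$ with $\eta^n\equiv 0$ on $\partial[n\bbS]$. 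So $\eta^n|_{[n\bbS]}$ is a centered Gaussian vanishing on the boundary, and only its covariance remains to be identified.

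To compute the covariance I would use the orthogonality characterization of the projection: $(\sqrt{\pi/2}\,\eta^n,\psi_u)_\nabla = (h^n,\psi_u)_\nabla$ for every interior vertex $u$. Writing $S = \big((\psi_u,\psi_v)_\nabla\big)_{u,v}$ for the (scaled) finite-element stiffness matrix indexed by interior vertices and $X = (\sqrt{\pi/2}\,\eta^n(u))_u$, this says $SX = Y$, where $Y = ((h^n,\psi_u)_\nabla)_u$ is centered Gaussian with covariance matrix exactly $S$. Hence $X = S^{-1}Y$ has covariance $S^{-1}SS^{-1} = S^{-1}$, that is, $\E[\eta^n(u)\eta^n(v)] = \tfrac2\pi (S^{-1})_{uv}$. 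Since $G^{[n\bbS]}$ is the Green function of simple random walk killed on $\partial[n\bbS]$, it satisfies $(G^{[n\bbS]})^{-1} = -\tfrac14\Delta$, where $\Delta$ is the standard nearest-neighbor graph Laplacian on $[n\bbS]$ with Dirichlet conditions on $\partial[n\bbS]$. So the lemma is now equivalent to the matrix identity $S = -\tfrac1{2\pi}\Delta$.

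The crux, and the one step that is genuinely a computation rather than a formality, is verifying this identity, which amounts to showing that $\int_{n\bbS}\nabla\psi_u\cdot\nabla\psi_v$ equals $4$ when $u=v$, equals $-1$ when $u\sim v$ in $[n\bbS]$, and vanishes otherwise. I would check this triangle by triangle, equivalently via the finite-element cotangent formula: on a triangle $T$ with vertices $a,b,c$ one has $\int_T\nabla\psi_a\cdot\nabla\psi_b = -\tfrac12\cot\theta_c$ (with $\theta_c$ the angle at $c$) and $\int_T|\nabla\psi_a|^2 = \tfrac12(\cot\theta_b+\cot\theta_c)$. Every triangle in this triangulation is a unit right isosceles triangle with angles $90^\circ,45^\circ,45^\circ$: for an axis-parallel edge $\{u,v\}$ each of the two triangles containing it has a $45^\circ$ angle opposite the edge, giving $-\tfrac12-\tfrac12 = -1$; for a diagonal edge (one of the down-right cuts) the opposite angle in each adjacent triangle is $90^\circ$, so the total is $0$ and such pairs never appear in $S$; and summing the local contributions of the six triangles around an interior vertex $u$ gives $\int|\nabla\psi_u|^2 = 2\cdot 1 + 4\cdot\tfrac12 = 4$. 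Thus $2\pi S = -\Delta$ entrywise, and combining with the previous paragraph yields $\E[\eta^n(u)\eta^n(v)] = G^{[n\bbS]}(u,v)$, so $\eta^n|_{[n\bbS]}$ has the law of a zero boundary DGFF. It is precisely here that the specific choice of the down-right diagonal triangulation matters: for a generic triangulation the diagonal edges would carry nonzero stiffness weights and $S$ would not coincide with the nearest-neighbor Laplacian. (This argument is in essence that of \cite[Section~4.3]{shef-gff}.)
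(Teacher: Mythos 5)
Your proof is correct: the projection is well defined on the finite-dimensional space $H^n$, the covariance identity $\E[\eta^n(u)\eta^n(v)]=\tfrac2\pi (S^{-1})_{uv}$ is right, the finite-element computation showing the stiffness matrix of the down-right triangulation is exactly the nearest-neighbor Dirichlet Laplacian (entries $4$, $-1$, and $0$ on the cut diagonals) is accurate, and the constants combine to give covariance $G^{[n\bbS]}$, which also explains the factor $\sqrt{\pi/2}$. The paper gives no proof of this lemma, only a citation to \cite[Section 4.3]{shef-gff} together with a remark about normalization; your argument is essentially that standard one, carried out in full with the normalization bookkeeping made explicit.
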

See \cite[Section 4.3]{shef-gff} for details on how to make sense of this projection. We note that the normalization constant $\sqrt{\frac\pi2}$ arises because our normalizations of the GFF and DGFF differ from those of \cite{shef-gff}.

Notice that given the values of $\eta^n$ restricted to $[n\bbS]$ and the fact that it is affine on each triangle, we can recover the function $\eta^n$ on the whole domain $n\bbS$ by linear interpolation within each triangle. Consequently, we will not distinguish between a DGFF defined on $[n\bbS]$ and a linearly-interpolated DGFF defined on $n\bbS$.

\begin{remark}\label{rem_independent}
With $h^n, \eta^n$ as in Lemma~\ref{lem_proj}, the distributions $\sqrt{\frac\pi2} \eta^n$ and $h^n - \sqrt{\frac\pi2} \eta^n$ are independent. This follows from the fact that the projections of $h^n$ to spaces orthogonal with respect to the Dirichlet inner product are independent; see \cite[Section 2.6]{shef-gff}.
\end{remark}

\section{Comparing DLFPP and continuum LFPP distances}\label{section_main}
In this section we prove Theorems~\ref{thm_coupling} and~\ref{thm_fractal_dimension}.

As in Lemma~\ref{lem_proj}, let $h^n$ be a zero boundary GFF on $\bbS$, and let $\sqrt{\frac\pi2} \eta^n$ be its projection onto $H^n$ (defined in Section~\ref{subsection_proj}). Recall that $\eta^n |_{[n\bbS]}$ has the law of a DGFF. 
Also write $h^n_1$ for the unit radius circle average regularization of $h^n$. 

In Section~\ref{subsection_discrepancy} we prove that away from the boundary, with superpolynomially high probability as $n \to \infty$ the discrepancy $|h^n_1 -\sqrt{\frac\pi2} \eta^n|$ is uniformly not too large. In Section~\ref{subsection_DLFPP_vs_LFPP}, we prove that as $n \to \infty$, with polynomially high probability  DLFPP and continuum LFPP distances differ by $n^{o(1)}$, proving Theorem~\ref{thm_coupling}. Finally in Section~\ref{subsection_distance_exponent} we use Theorem~\ref{thm_coupling} and the continuum LFPP distance exponent from \cite{dg-lqg-dim} to obtain the DLFPP distance exponent, proving Theorem~\ref{thm_fractal_dimension}.

\subsection{Discrepancy between DGFF and circle average regularized GFF} \label{subsection_discrepancy}
In this section, we establish that for an open set $U \subset \bbS$ with $\dit (U, \partial \bbS) > 0$, with high probability the discrepancy between $h^n_1$ and $\sqrt{\frac\pi2}\eta^n$ restricted to $[nU]$ is uniformly bounded by $o( \log n)$. 

We first show that the pointwise differences $h^n_1(v) - \sqrt{\frac\pi2} \eta^n(v)$ for $v \in [nU]$ have uniformly bounded variances. 

\begin{lemma}\label{lem_coupled_field_variance}
Let $h^n$ be a zero boundary GFF on $n\bbS$, and as in Lemma~\ref{lem_proj} let $\sqrt{\frac\pi2}\eta^n$ be its projection onto $H^n$. Then for any open $U \subset \bbS$ with $\dit(U, \partial \bbS) > 0$, uniformly for all $v \in [nU]$ we have
\[\Var \left(h^n_1(v) - \sqrt{\frac\pi2}\eta^n(v)\right) = O_U(1).\]
\end{lemma}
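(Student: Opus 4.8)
The plan is to split the discrepancy $h^n_1(v)-\sqrt{\pi/2}\,\eta^n(v)$ into a contribution from the orthogonal complement of the piecewise-affine projection and a contribution from the interpolated DGFF alone, exploit the independence of these two pieces (Remark~\ref{rem_independent}), and then estimate each variance using the local discrete Green function asymptotics of Lemma~\ref{lem_discrete_green_approx} together with the exact variance of a GFF circle average. Concretely, write $\eta^n_1(v)$ for the average of the linearly interpolated field $\eta^n$ over the circle $\partial B_1(v)$, and set $\rho^n := h^n-\sqrt{\pi/2}\,\eta^n$; then by Lemma~\ref{lem_proj} and Remark~\ref{rem_independent}, $\rho^n$ is independent of $\eta^n$, and $\rho^n_1(v):=h^n_1(v)-\sqrt{\pi/2}\,\eta^n_1(v)$ is well defined since $\rho^n$ is a GFF minus a continuous function. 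I would first record the decomposition
\[ h^n_1(v)-\sqrt{\tfrac{\pi}{2}}\,\eta^n(v) \;=\; \rho^n_1(v)\;+\;\sqrt{\tfrac{\pi}{2}}\,\bigl(\eta^n_1(v)-\eta^n(v)\bigr), \]
whose two summands are deterministic linear functionals of the independent fields $\rho^n$ and $\eta^n$; hence their variances add, and it suffices to bound $\Var(\rho^n_1(v))$ and $\Var(\eta^n_1(v)-\eta^n(v))$ by $O_U(1)$, uniformly over $v\in[nU]$.

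For the interpolated-DGFF term, I would use that $\dit(U,\partial\bbS)>0$, so for large $n$ the circle $\partial B_1(v)$ lies deep inside $n\bbS$ and meets only $O(1)$ triangles, all determined by the $O(1)$ lattice points $w$ within an absolute bounded distance of $v$. Since the triangulation is $\Z^2$-translation invariant and circle averaging reproduces constants, $\eta^n_1(v)=\sum_w c_w\,\eta^n(w)$ with $v$- and $n$-independent coefficients $c_w$ summing to $1$, so that $\eta^n_1(v)-\eta^n(v)=\sum_w c_w\bigl(\eta^n(w)-\eta^n(v)\bigr)$ with $\sum_w|c_w|=O(1)$. Applying Cauchy--Schwarz and then Lemma~\ref{lem_discrete_green_approx} (with $k$ a suitable absolute constant, on a slightly enlarged open set still at positive distance from $\partial\bbS$ containing all the relevant $w$) to $\Var(\eta^n(w)-\eta^n(v))=G^{[n\bbS]}(w,w)+G^{[n\bbS]}(v,v)-2G^{[n\bbS]}(v,w)$, the $\tfrac{2}{\pi}\log n$ terms cancel and one obtains $O_U(1)$; hence $\Var(\eta^n_1(v)-\eta^n(v))=O_U(1)$.

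For the $\rho^n$ term, independence of $\rho^n$ and $\eta^n$ gives $\Var(h^n_1(v))=\Var(\rho^n_1(v))+\tfrac{\pi}{2}\Var(\eta^n_1(v))$, so it is enough to show that $\Var(h^n_1(v))$ and $\tfrac{\pi}{2}\Var(\eta^n_1(v))$ are each $\log n+O_U(1)$, and subtract. The first is the standard circle-average variance: for $n$ large (so that $1<\dit(v,\partial(n\bbS))$) one has $\Var(h^n_1(v))=\log\op{CR}(v;n\bbS)=\log n+O_U(1)$, using $\op{CR}(v;n\bbS)=n\cdot\op{CR}(v/n;\bbS)$ and that $\op{CR}(v/n;\bbS)$ is bounded above and away from zero in terms of $U$. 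For the second, expand $\eta^n_1(v)=\eta^n(v)+(\eta^n_1(v)-\eta^n(v))$: Lemma~\ref{lem_discrete_green_approx} gives $\tfrac{\pi}{2}\Var(\eta^n(v))=\tfrac{\pi}{2}G^{[n\bbS]}(v,v)=\log n+O_U(1)$; the cross term equals $\tfrac{\pi}{2}\sum_w c_w\bigl(G^{[n\bbS]}(v,w)-G^{[n\bbS]}(v,v)\bigr)=O_U(1)$ again by Lemma~\ref{lem_discrete_green_approx} and $\sum_w|c_w|=O(1)$; and the remaining variance is $O_U(1)$ by the previous paragraph. Subtracting gives $\Var(\rho^n_1(v))=O_U(1)$, which completes the argument.

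The one genuinely delicate point --- the main obstacle --- is that $\Var(\rho^n_1(v))$ is extracted as a difference of two quantities of size $\log n$, so I need the leading $\log n$ terms of $\Var(h^n_1(v))$ and $\tfrac{\pi}{2}\Var(\eta^n_1(v))$ to agree up to $O(1)$, not merely up to $o(\log n)$. This is precisely why Lemma~\ref{lem_discrete_green_approx} is stated with an $O_{U,k}(1)$ error term rather than just an asymptotic, and it is the role of the normalization constant $\sqrt{\pi/2}$ to make the discrete and continuum leading coefficients match; I would check this consistency carefully. A secondary technical point is the uniformity in $v$ of the interpolation coefficients $c_w$ and of the $O(1)$ number of relevant triangles, which both follow from the $\Z^2$-translation invariance of the triangulation together with $v$ lying at distance $\gtrsim n$ from $\partial(n\bbS)$.
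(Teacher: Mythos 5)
Your proposal is correct and follows essentially the same route as the paper's proof: the same decomposition into $h^n_1(v)-\sqrt{\pi/2}\,\eta^n_1(v)$ plus the interpolation error $\sqrt{\pi/2}\,(\eta^n_1(v)-\eta^n(v))$, the same use of the independence from Remark~\ref{rem_independent} to write the first variance as a difference of the circle-average variance $\log n + O_U(1)$ and $\tfrac{\pi}{2}\Var(\eta^n_1(v)) = \log n + O_U(1)$ via Lemma~\ref{lem_discrete_green_approx}, and the same cancellation of the $\tfrac{2}{\pi}\log n$ terms for the interpolation error. The only differences are cosmetic (e.g., expanding $\Var(\eta^n_1(v))$ through a cross term rather than directly as a double sum over the interpolation weights).
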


\begin{proof}
Observe that, writing $\eta^n_1$ for the unit radius circle average of the linearly interpolated DGFF $\eta^n$,
\eqb \label{eqn_circle_av_vs_dgff_var}
h^n_1(v) - \sqrt{\frac\pi2} \eta^n(v) = \left(h^n_1 (v) - \sqrt{\frac\pi2} \eta^n_1(v)\right) + \sqrt{\frac\pi2} (\eta^n_1(v) - \eta^n(v)).
\eqe
We will bound the variance of each of the two RHS terms by $O_U(1)$. By Remark~\ref{rem_independent}, we have $\Var(h^n_1 (v) - \eta^n_1(v)) = \Var h^n_1 (v) - \Var \eta^n_1(v)$. By \cite[Proposition 3.2]{shef-kpz} we have uniformly for $v \in [nU]$ that
\eqb \label{eqn_circle_av_var}
\Var h^n_1 (v) = \log n +O_U(1).
\eqe
We turn to analyzing $\Var \eta^n_1(v)$. Notice that since $\eta^n$ is affine on each triangle, we can write $\eta^n_1(v)$ as a weighted average of $\eta^n(u)$ for $u$ close to $v$. Concretely, let $N_v = \{ u \in \Z^2 \: : \: |u-v| < 2 \}$, then for deterministic nonnegative weights $\{w_u\}$ with $\sum_{u \in N_v} w_u = 1$ we have
\[
\eta^n_1 (v) = \sum_{u \in N_v} w_u \eta^n (u).
\]
Thus, by Lemma~\ref{lem_discrete_green_approx} we have
\eqb \label{eqn_dgff_var}
\Var \eta^n_1 (v) = \sum_{u,u' \in N_V}  w_uw_{u'}\frac2\pi \log n + O_U(1) = \frac2\pi\log n +O_U(1).  
\eqe
Combining \eqref{eqn_circle_av_var} and \eqref{eqn_dgff_var}, we conclude that $\Var(h^n_1 (v) - \sqrt{\frac\pi2} \eta^n_1(v)) = O_U(1)$, so we have bounded the variance of the first term of the RHS of \eqref{eqn_circle_av_vs_dgff_var}. We can bound the variance of the second term of \eqref{eqn_circle_av_vs_dgff_var} by $O_U(1)$ in exactly the same way that we derived \eqref{eqn_dgff_var}. We are done.
\end{proof}
\begin{remark}
By doing a more careful analysis of the discrete and continuum Green functions, one can improve the statement of Lemma~\ref{lem_coupled_field_variance} to the following: There exists some explicit universal constant $C$ such that for all open $U \subset \bbS$ with $\dit(U, \partial \bbS)>0$, for $n$ sufficiently large in terms of $U$, we have $\Var(h^n_1(v) - \sqrt{\frac\pi2}\eta^n(v)) < C$ for all $v \in [nU]$. This statement is unnecessary for our purposes so we omit its proof.
\end{remark}

Since $\#[n U] \leq n^2$ is not too large, we can show using Lemma~\ref{lem_coupled_field_variance} that with high probability the GFF circle-average field and the DGFF are uniformly not too different for all $v \in [nU]$.

\begin{lemma} \label{lem_coupling_close}
For $n>1$ and $U, h^n$ and $\eta^n$ as in Lemma~\ref{eqn_circle_av_vs_dgff_var}, there is a constant $C$ depending only on $U$ so that 
\[\P \left[\max_{v\in[nU]} \left(h^n_{1}(v) - \sqrt{\frac\pi2}\eta^n(v)\right) \geq C \sqrt{ \log n} +x \right] \leq e^{-x^2/2C} .\]
\end{lemma}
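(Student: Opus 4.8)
\emph{Proof proposal.} Write $X_v := h^n_1(v) - \sqrt{\pi/2}\,\eta^n(v)$ for $v \in [nU]$. Since $h^n_1(v)$ and $\eta^n(v)$ are jointly centered Gaussian (both are linear functionals of the underlying GFF $h^n$), each $X_v$ is a centered Gaussian random variable, and Lemma~\ref{lem_coupled_field_variance} provides a finite constant $\sigma_U^2$, depending only on $U$, with $\Var(X_v) \le \sigma_U^2$ for every $n > 1$ and every $v \in [nU]$ (the $O_U(1)$ bound is uniform in $n$ for large $n$, and the remaining finitely many $n$ each contribute a finite variance). The plan is simply to upgrade this pointwise variance control to the claimed uniform tail bound by a union bound — the standard route from ``bounded variance at each point'' to ``sub-Gaussian maximum over $\mathrm{poly}(n)$ points''.

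Concretely, I would first recall the elementary one-sided Gaussian tail estimate $\P[X_v \ge t] \le \tfrac12 e^{-t^2/(2\sigma_U^2)}$, valid for all $t \ge 0$ and all $v$ (using $\Var(X_v)\le\sigma_U^2$). Since $[nU]\subset[n\bbS]$ has at most $(n+1)^2 \le 4n^2$ points, a union bound gives, for every $t \ge 0$,
\[
\P\Bigl[\max_{v \in [nU]} X_v \ge t\Bigr] \le 2n^2\, e^{-t^2/(2\sigma_U^2)}.
\]
I would then substitute $t = C\sqrt{\log n} + x$ (legitimate since $t\ge 0$ for $n>1$ and $x\ge 0$), use $(C\sqrt{\log n}+x)^2 \ge C^2\log n + x^2$, and choose $C$ large enough in terms of $\sigma_U$ — for instance $C \ge \max(3\sigma_U,\sigma_U^2)$ — so that $2n^2 e^{-C^2\log n/(2\sigma_U^2)} \le 2n^{-1/2} \le 1$ for all $n \ge 2$, while $C \ge \sigma_U^2$ weakens $e^{-x^2/(2\sigma_U^2)}$ to $e^{-x^2/(2C)}$. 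This yields the stated inequality.

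There is essentially no serious obstacle here: the probabilistic content has already been extracted in Lemma~\ref{lem_coupled_field_variance}, and the present statement is the routine pointwise-to-uniform step. The only points requiring (minor) care are that the variance bound $\sigma_U^2$ be uniform in $n$ — which is exactly what the $O_U(1)$ in Lemma~\ref{lem_coupled_field_variance} provides — and that the single constant $C$ be chosen to simultaneously absorb the union-bound factor $n^{O(1)}$ (handled by the $\sqrt{\log n}$ shift) and to relax $e^{-x^2/(2\sigma_U^2)}$ to $e^{-x^2/(2C)}$ (handled by taking $C \ge \sigma_U^2$). One could alternatively deduce the bound from the Borell--TIS inequality together with the standard estimate $\E\max_{v\in[nU]} X_v \le \sigma_U\sqrt{2\log \#[nU]} = O_U(\sqrt{\log n})$, but the union bound is more elementary and self-contained. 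I would also note the implicit hypothesis $x \ge 0$, the case $x<0$ being either vacuous or reducible to it.
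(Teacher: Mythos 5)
Your proof is correct, but it follows a genuinely different (and more elementary) route than the paper. The paper does not take a direct union bound on the tail: it first bounds the \emph{expectation} of the maximum, writing $\Delta^n(v) = h^n_1(v) - \sqrt{\pi/2}\,\eta^n(v)$, estimating $\E[\Delta^n(v)\mathbbm{1}\{\Delta^n(v)\ge r\}]$ with a Gaussian tail bound, summing over the $O(n^2)$ points with $r \asymp \sqrt{\log n}$ to get $\E[\max_{v\in[nU]}\Delta^n(v)] \le C\sqrt{\log n}$, and then invokes the Gaussian concentration (Borell--TIS) inequality with variance proxy $C > \max_v \Var(\Delta^n(v))$ to obtain the clean tail $e^{-x^2/2C}$ with no polynomial prefactor. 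So the alternative you mention in passing at the end is in fact the paper's actual argument. Your version instead absorbs the $n^{O(1)}$ entropy factor directly into the $C\sqrt{\log n}$ shift via the inequality $(C\sqrt{\log n}+x)^2 \ge C^2\log n + x^2$ and relaxes the Gaussian constant by taking $C \ge \sigma_U^2$; this is self-contained and avoids citing concentration of measure, at the cost of a slightly cruder constant in the exponent. Both yield the stated lemma, which in any case is only used with $x \asymp \log n$, where the difference is immaterial. One tiny arithmetic slip: with $C \ge 3\sigma_U$ your union bound is $2n^2 e^{-C^2\log n/(2\sigma_U^2)} \le 2n^{-5/2}$, which is indeed $\le 1$ for $n \ge 2$, but the intermediate claim ``$\le 2n^{-1/2} \le 1$ for $n\ge 2$'' is false at $n=2,3$; either quote the sharper power of $n$ or enlarge $C$ slightly. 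This does not affect the validity of the argument.
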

\begin{proof}
For notational convenience write $\Delta^n(v) = h^n_{1}(v) - \sqrt{\frac\pi2} \eta^n(v)$; this is a centered Gaussian random variable. Let $\wt C$ (depending only on $U$) be an upper bound for $\Var( \Delta^n(v))$ for all $v \in [nU]$ (Lemma~\ref{lem_coupled_field_variance}). Then by a standard Gaussian tail bound we have for any $v \in [nU]$ and $r>0$ that
\alb
\E[\Delta^n(v) \mathbbm1\{ \Delta^n(v) \geq r\} ] 
\leq  \sqrt{\frac{\wt C}{2\pi}} \left( 1 + \frac{\wt C}{r^2} \right) e^{-r^2/2\wt C} . 
\ale
Consequently we can set $r = \sqrt{2 \wt C \log n^{2}}$ and take a union bound to deduce, for some $C > \wt C$ depending only on $U$,
\[\E[\max_{v \in [nU]} \Delta^n(v)] \leq  r + \sum_{v \in [nU]} \E[\Delta^n(v) \mathbbm1\{ \Delta^n(v) \geq r\} ] \leq  C \sqrt{ \log n}. \]
Finally, since $C > \Var (\Delta^n(v))$ for all $v \in [nU]$, we can apply the Gaussian concentration inequality (see for instance \cite[Theorem 7.1]{ledoux-concentration}) to obtain Lemma~\ref{lem_coupling_close}.
\end{proof}
As an immediate corollary, by setting $x\asymp \log  n$ we deduce the following, which is the first part of Theorem~\ref{thm_coupling}.
\begin{proposition}\label{prop_coupling_close}
For $U, h^n$ and $\eta^n$ as in Lemma~\ref{eqn_circle_av_vs_dgff_var}, for any fixed $\zeta > 0$ we have with superpolynomially high probability as $n \to \infty$ that 
\[\max_{v \in [nU]} \left|h^n_1(v) - \sqrt{\frac\pi2}\eta^n(v)\right| \leq \zeta  \log n. \]
\end{proposition}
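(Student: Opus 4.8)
The goal is Proposition~\ref{prop_coupling_close}, which upgrades Lemma~\ref{lem_coupling_close} into a superpolynomial probability bound on the \emph{absolute} discrepancy. The plan is to apply Lemma~\ref{lem_coupling_close} twice and then choose $x$ appropriately. First I would note that the discrepancy field $\Delta^n(v) = h^n_1(v) - \sqrt{\frac\pi2}\eta^n(v)$ is centered and Gaussian, so it is symmetric in law; consequently Lemma~\ref{lem_coupling_close} applied to $-\Delta^n$ gives the matching lower tail bound. Combining the two via a union bound yields
\[\P\left[\max_{v \in [nU]} \left|h^n_1(v) - \sqrt{\tfrac\pi2}\eta^n(v)\right| \geq C\sqrt{\log n} + x\right] \leq 2 e^{-x^2/2C}.\]

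Next I would plug in $x = \tfrac12 \zeta \log n$. For $n$ large enough (in terms of $\zeta$ and $U$), the deterministic term satisfies $C\sqrt{\log n} \leq \tfrac12 \zeta \log n$, so the left-hand event is contained in $\{\max_{v}|\Delta^n(v)| \geq \zeta \log n\}$... wait, I want the reverse: the event $\{\max_v |\Delta^n(v)| \geq \zeta\log n\}$ is contained in $\{\max_v |\Delta^n(v)| \geq C\sqrt{\log n} + x\}$, so its probability is at most $2e^{-x^2/2C} = 2\exp(-\zeta^2 (\log n)^2 / 8C)$. Since $(\log n)^2$ grows faster than $C'\log n$ for any constant $C'$, this bound is $o(n^{-C'})$ for every $C' > 0$; that is exactly the definition of superpolynomially high probability given in Section~\ref{subsection-notation}. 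So the conclusion follows.

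There is no real obstacle here — this is a routine corollary, as the text ("As an immediate corollary, by setting $x \asymp \log n$") already signals. The only points requiring a word of care are: (i) justifying the lower tail, which is immediate from the fact that $-\Delta^n$ has the same law as $\Delta^n$ and hence Lemma~\ref{lem_coupling_close} applies verbatim to it; and (ii) absorbing the $C\sqrt{\log n}$ term into $\tfrac12\zeta\log n$, valid once $n$ is large depending on $\zeta$ and $U$ — which is fine since the statement is an as-$n\to\infty$ statement for fixed $\zeta$. I would write the proof in two or three sentences along exactly these lines.
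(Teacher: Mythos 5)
Your proof is correct and is exactly the argument the paper intends: apply Lemma~\ref{lem_coupling_close} (and its mirror image for $-\Delta^n$, valid since the centered Gaussian field $\Delta^n$ is symmetric in law) with $x \asymp \log n$, absorb $C\sqrt{\log n}$ for large $n$, and note that $e^{-c(\log n)^2}$ decays faster than any polynomial. No gaps.
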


\subsection{Comparing DLFPP to continuum LFPP}\label{subsection_DLFPP_vs_LFPP}
In this section, we use the comparison result Proposition~\ref{prop_coupling_close} and \cite[Proposition 3.16]{dg-lqg-dim} to prove the second part of Theorem~\ref{thm_coupling}: under the coupling of Proposition~\ref{prop_coupling_close}, with high probability DLFPP and continuum LFPP distances differ by a multiplicative factor of $n^{o(1)}$. 

Recall that we defined the $\xi$-DLFPP distance $D^\xi_{\eta^n}$ and $\xi$-continuum LFPP distance $D^\xi_{h^n, \LFPP}$. We further define the lattice LFPP distance $D^{\xi, \lattice}_{h^n, \LFPP}$ in exactly the same way that we define $D^\xi_{\eta^n}$ in Definition~\ref{def_model}, except we use vertex weights of $e^{\xi h^n_1(v)}$ rather than $e^{\xi \sqrt{\pi/2} \eta^n(v)}$. 

In Lemma~\ref{lem_circ_av_compr_to_LFPP}, using \cite[Proposition 3.16]{dg-lqg-dim} we check that with high probability $D^\xi_{h^n, \LFPP}$ and $D^{\xi, \lattice}_{h^n, \LFPP}$ are comparable. Since Proposition~\ref{prop_coupling_close} tells us that $D^{\xi, \lattice}_{h^n, \LFPP}$ and $D^\xi_{\eta^n}$ are comparable, we complete the proof of Theorem~\ref{thm_coupling}.

\begin{lemma}\label{lem_circ_av_compr_to_LFPP}
For each $\xi, \zeta > 0$ and open rectilinear polygon $P \subset \bbS$ with $\dit(P, \partial \bbS) >0$, with polynomially high probability as $n\to \infty$ we have for all $z,w \in nP$ that 
\eqb\label{eq_circ_av_compr_to_LFPP}
n^{-\zeta} D^{\xi, \lattice}_{h^n, \LFPP}([z],[w];[nP])
\leq D^\xi_{h^n, \LFPP}(z,w;nP) \leq n^\zeta \left( D^{\xi, \lattice}_{h^n, \LFPP} ([z],[w];[nP]) + e^{\xi h^n_1([z])} \right) .
\eqe
\end{lemma}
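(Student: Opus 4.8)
The plan is to prove the two inequalities separately, in both directions transferring paths between the continuum and lattice settings and controlling the resulting change of weight via the local regularity of the circle average field $h^n_1$ together with \cite[Proposition 3.16]{dg-lqg-dim} (transported to our normalization as in Remark~\ref{rem_LFPP_defn}), pushing all constant and $n^{o(1)}$ losses into the factors $n^{\pm\zeta}$ and the additive term $e^{\xi h^n_1([z])}$. The only probabilistic ingredient is a modulus-of-continuity event. Fix an open rectilinear polygon $P'$ with $\ol P \subset P'$ and $\dit(P',\partial\bbS)>0$, and a large absolute constant $R_0$. For any prescribed $\zeta'>0$, with superpolynomially high probability as $n\to\infty$ one has both $\inf_{n\ol{P'}} h^n_1 \ge -\zeta'\log n$ and $\sup\{|h^n_1(x)-h^n_1(y)| : x,y\in n\ol{P'},\ |x-y|\le R_0\} \le \zeta'\log n$; in particular $e^{\xi h^n_1(x)}\le n^{\zeta'\xi}e^{\xi h^n_1(y)}$ whenever $x,y\in n\ol{P'}$ with $|x-y|\le R_0$. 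This follows from a union bound over the $O(n^2)$ balls of radius $R_0$ meeting $n\ol{P'}$, using that for $|x-y|\le R_0$ the increment $h^n_1(x)-h^n_1(y)$ is centered Gaussian with variance $O(1)$ and the oscillation of $h^n_1$ over such a ball has expectation $O(1)$ uniformly in $n$ (both from the circle-average covariance estimates of \cite[Proposition 3.2]{shef-kpz}; compare Lemma~\ref{lem_coupled_field_variance}), together with Gaussian concentration; alternatively these bounds are contained in the estimates behind \cite[Proposition 3.16]{dg-lqg-dim}. We work on this event henceforth.

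For the upper bound I would choose a lattice path $[z]=w_0\sim w_1\sim\dots\sim w_k=[w]$ in $[nP]$ with $\sum_{j=0}^k e^{\xi h^n_1(w_j)} \le 2\,D^{\xi,\lattice}_{h^n,\LFPP}([z],[w];[nP])$ (with $k=0$ if $[z]=[w]$), and form the piecewise linear path in $\ol{nP}$ concatenating the segment from $z$ to $[z]$, the unit segments $[w_j,w_{j+1}]$, and the segment from $[w]$ to $w$; that these segments lie in $\ol{nP}$ for $n$ large is where the rectilinear hypothesis on $P$ is used (cf.\ the remark after Theorem~\ref{thm_coupling}). On the modulus event the $j$-th unit segment contributes to $\int e^{\xi h^n_1(P(t))}|P'(t)|\,dt$ at most $n^{\zeta'\xi}e^{\xi h^n_1(w_j)}$, and the two end segments each have Euclidean length at most $\sqrt2$ and contribute at most $\sqrt2\,n^{\zeta'\xi}e^{\xi h^n_1([z])}$ and $\sqrt2\,n^{\zeta'\xi}e^{\xi h^n_1(w_k)}$ respectively. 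Summing and using the choice of the $w_j$ gives $D^\xi_{h^n,\LFPP}(z,w;nP)\le O(1)\,n^{\zeta'\xi}\big(D^{\xi,\lattice}_{h^n,\LFPP}([z],[w];[nP]) + e^{\xi h^n_1([z])}\big)$, which is at most the claimed $n^\zeta(\cdots)$ once $\zeta'$ is small and $n$ large; the additive term is genuinely needed when $[z]=[w]$, $z\ne w$, where the lattice distance vanishes but the path still has positive length $\lesssim 1$.

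For the lower bound I would start from a near-optimal piecewise $C^1$ path $P\colon[0,T]\to\ol{nP}$ from $z$ to $w$, reparametrized by arc length, and chop $[0,T]$ greedily by $t_{i+1}=\min(T,\inf\{t>t_i:|P(t)-P(t_i)|=1\})$, giving $O(T+1)$ intervals, each with $P([t_i,t_{i+1}])\subset\ol{B_1(P(t_i))}$ and, except possibly the last, of length at least $1$. Then $[P(t_0)],\dots,[P(t_m)]$ can be turned into an honest lattice path in $[nP]$ from $[z]$ to $[w]$ by inserting $O(1)$ intermediate lattice vertices between consecutive $[P(t_i)]$ (possible since $|[P(t_i)]-[P(t_{i+1})]|<3$), all lying in $\ol{B_{R_0/2}(P(t_i))}$; on the modulus event the $O(1)$ vertices charged to a non-terminal interval $i$ have total weight at most $O(1)\,n^{2\zeta'\xi}e^{\xi h^n_1(P(t_i))}\le O(1)\,n^{3\zeta'\xi}(t_{i+1}-t_i)\inf_{\ol{B_1(P(t_i))}}e^{\xi h^n_1}\le O(1)\,n^{3\zeta'\xi}\int_{t_i}^{t_{i+1}}e^{\xi h^n_1(P(t))}|P'(t)|\,dt$. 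Summing over the non-terminal intervals, charging the (finitely many) remaining vertices — those of the terminal interval and the endpoints — to an adjacent non-terminal interval when the path has length at least $1$, and treating the short-path case $T<1$ directly (here the bound $\inf_{n\ol{P'}}h^n_1\ge-\zeta'\log n$ and the $n^{-\zeta}$ multiplicative slack do the bookkeeping), and then letting the path be near-optimal, yields $n^{-\zeta}D^{\xi,\lattice}_{h^n,\LFPP}([z],[w];[nP])\le D^\xi_{h^n,\LFPP}(z,w;nP)$. Equivalently one may invoke \cite[Proposition 3.16]{dg-lqg-dim}, which already compares $D^\xi_{h^n,\LFPP}$ with an LFPP distance over grid paths; what then remains is the edge-versus-vertex accounting above (each lattice vertex is an endpoint of at most four unit edges, each unit edge has two endpoints). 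I expect the main obstacle to be exactly this last bookkeeping: making everything uniform over all $z,w\in nP$ on one event, controlling the interaction with $\partial P$ (whence the enlargement to $P'$ and the rectilinear hypothesis), and absorbing the endpoint and short-path contributions (whence the additive $e^{\xi h^n_1([z])}$ term and the choice of $n^{\pm\zeta}$ rather than $n^{\pm o(1)}$ slack); the analytic heart — that traversing a bounded-diameter set costs between a constant and $n^{o(1)}$ times $e^{\xi\cdot(\text{circle average there})}$ — is just the modulus estimate of the first paragraph, or \cite[Proposition 3.16]{dg-lqg-dim}, so the rest is deterministic path surgery.
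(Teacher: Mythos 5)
Your route is genuinely different from the paper's: the paper does no path surgery at all, but simply transfers \cite[Proposition 3.16]{dg-lqg-dim} (rescaling $\delta=1/n$ as in Remark~\ref{rem_LFPP_defn}, replacing the domain pair by $(\bbS,P)$, and — the substantive step — replacing the auxiliary field $\wh h_\delta$ appearing in that proposition by $h^n_1$ via \cite[Equation (3.34)]{dg-lqg-dim}, at the cost of an extra factor $n^\zeta$). Re-proving the comparison directly, as you do, is in principle fine, and most of your surgery is sound: the oscillation part of your event (increments over distance $\le R_0$ have $O(1)$ variance, Borell--TIS, union bound over $O(n^2)$ balls), the upper-bound interpolation with the additive term for $[z]=[w]$, and the charging of non-terminal arc-length-$\ge 1$ intervals all work. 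But one ingredient of your event is false: $\inf_{n\ol{P'}}h^n_1\ge-\zeta'\log n$ does \emph{not} hold with superpolynomially high probability. Since $\Var h^n_1(v)=\log n+O(1)$ (Lemma~\ref{lem_coupled_field_variance}), a single point goes below $-\zeta'\log n$ with probability $\approx n^{-\zeta'^2/2}$, only polynomially small, and by symmetry with the maximum (compare the use of \cite{bolthausen2001} in Lemma~\ref{lem_apprx_bounds}) the minimum of $h^n_1$ over $[n\ol{P'}]$ is $-(2+o(1))\log n$ with probability tending to $1$; so for $\zeta'<2$ your event has probability tending to $0$, not $1$.

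This false estimate is invoked exactly at the one delicate point, the short-path case of the lower bound, and no bookkeeping can close that case in the form you (and the displayed inequality) state it: if $z\ne w$ straddle the perpendicular bisector of a lattice edge with $|z-w|\ll n^{-\zeta}$, then $D^\xi_{h^n,\LFPP}(z,w;nP)\asymp|z-w|e^{\xi h^n_1(z)}$, while any lattice path from $[z]$ to $[w]$ passes through $[z]$, so $D^{\xi,\lattice}_{h^n,\LFPP}([z],[w];[nP])\ge e^{\xi h^n_1([z])}\ge n^{-\xi\zeta'}e^{\xi h^n_1(z)}$ on your own oscillation event, and the left inequality of \eqref{eq_circ_av_compr_to_LFPP} is out of reach for such pairs. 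This degenerate regime is precisely why \cite[Proposition 3.16]{dg-lqg-dim} carries the additive correction $e^{\xi\wh h_\delta([z])}$ on the lower-bound side; your write-up needs either to retain such a correction for pairs with $[z]\ne[w]$ but $|z-w|$ small, or to restrict the lower bound to, say, $|z-w|\ge1$ (which is all that is used downstream in Lemma~\ref{lem_apprx_bounds}), rather than to appeal to a lower bound on $\inf h^n_1$. Relatedly, your fallback sentence ``equivalently one may invoke Proposition 3.16'' skips what actually remains in that reduction: not edge-versus-vertex accounting, but the replacement of $\wh h_\delta$ by $h^n_1$ (via their Equation (3.34)) and the change of scale and domain, which is the entire content of the paper's proof. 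A smaller point: vertices of $[nP]$ may lie a bounded distance outside $\ol{nP}$, so your interpolated continuum path through them need not stay in $\ol{nP}$; the rectilinear hypothesis by itself does not dispose of this boundary interaction, which deserves an explicit word in the upper-bound step.
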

\begin{proof}
This is precisely the statement of \cite[Proposition 3.16]{dg-lqg-dim}, but with three differences which we address in turn. 
\begin{itemize}
\item It considers LFPP in a fixed domain, but sends the circle average radius $\delta$ to zero. This is in contrast with our setting where we have LFPP in a growing domain but fix the circle average radius.
\medskip

\noindent
This difference is cosmetic; see Remark~\ref{rem_LFPP_defn}. We set $\delta = \frac1n$ and then scale everything in \cite[Proposition 3.16]{dg-lqg-dim} up by a factor of $n$ so that it discusses LFPP in $n\bbS$ with unit radius circle averages. Henceforth we consider the scaled-up version of \cite[Proposition 3.16]{dg-lqg-dim}.

\item It uses $[-1,2]^2$ and $\bbS$ rather than our sets $\bbS$ and $P$ respectively.
\medskip

\noindent
The same method of proof applies, since the proofs of their Lemmas 3.4 and 3.7 require only $\dit(\bbS, \partial [-1,2]^2) > 0$ (we assume the corresponding $\dit(P, \partial \bbS) > 0$), and their argument for replacing curves in $n\bbS$ with lattice paths in $[n\bbS]$ (and vice versa) works when one replaces $\bbS$ with $P$, for sufficiently large $n$.

\item In our rescaled notation, instead of proving \eqref{eq_circ_av_compr_to_LFPP}, \cite[Proposition 3.16]{dg-lqg-dim} instead proves 
\eqb\label{eq_3.16}
n^{-\zeta} \left( \wh D^{\delta}_{\LFPP}([z],[w];[nP]) - e^{\xi \wh h_\delta([z])} \right)
\leq D^\xi_{h^n, \LFPP}(z,w;nP) \leq n^\zeta \wh D^{\delta}_{\LFPP}([z],[w];[nP]),
\eqe
where $\wh h_\delta$ is a field coupled to $h^n$, and $\wh D^\delta_{\LFPP}$ is defined the same way as $D^{\xi, \lattice}_{h^n, \LFPP}$ except we use vertex weights of $e^{\xi \wh h_\delta}$ instead of $e^{\xi h^n_1}$, and also set for all $v \in [nP]$ that $\wh D^\delta_{\LFPP}(v,v;[nP]) = e^{\xi \wh h_\delta(v)}$ instead of 0. 
\medskip

\noindent
Firstly, we modify the definition of $\wh D^\delta_{\LFPP}(v,v;[nP])$, setting it equal to 0 instead of $ e^{\xi \wh h_\delta(v)}$, and correspondingly move the correction term $e^{\xi \wh h_\delta([z])}$ from the lower bound to the upper bound in \eqref{eq_3.16}. Next, \cite[Equation (3.34)]{dg-lqg-dim} tells us that with polynomially high probability as $n \to \infty$, for all $z\in nP$ we have $|\wh h_\delta (z) - h^n_1(z)| \leq \zeta \log n$. Thus, with polynomially high probability as $n \to \infty$, we can replace $\wh h_\delta$ and $D^{\xi, \lattice}_{h^n, \LFPP}$ with $h^n_1$ and $\wh D^\delta_{h^n, \LFPP}$ in \eqref{eq_3.16}, incurring a factor of $n^{\zeta}$. This gives \eqref{eq_circ_av_compr_to_LFPP} with $2\zeta$ instead of $\zeta$, so we are done. 
 \end{itemize}
\end{proof}

Using Lemma~\ref{lem_circ_av_compr_to_LFPP} and Proposition~\ref{prop_coupling_close}, we prove Theorem~\ref{thm_coupling}. 

\begin{proof}[Proof of Theorem~\ref{thm_coupling}]
The first assertion of Theorem~\ref{thm_coupling} is Proposition~\ref{prop_coupling_close}. For the second, notice that Proposition~\ref{prop_coupling_close} implies that with polynomially high probability as $n \to \infty$, uniformly over $z, w \in nP$ we have $h^n_1([z]) \leq \sqrt{\frac\pi2} \eta^n([z]) + \zeta \log n$ and 
\[n^{-\zeta} D^\xi_{\eta^n}([z],[w];[nP]) \leq  D^{\xi, \lattice}_{h^n,\LFPP}([z],[w];[nP]) \leq  n^{\zeta} D^\xi_{\eta^n}([z],[w];[nP]). \]
Combining this with Lemma~\ref{lem_circ_av_compr_to_LFPP} yields the second assertion of Theorem~\ref{thm_coupling} (with $2\zeta$ instead of $\zeta$).
\end{proof}

\subsection{The DLFPP distance exponent}\label{subsection_distance_exponent}
Finally, we use Theorem~\ref{thm_coupling} with the continuum LFPP distance exponent from \cite{dg-lqg-dim} to obtain Theorem~\ref{thm_fractal_dimension}.

\begin{lemma}\label{lem_apprx_bounds}
For $\gamma \in (0,2)$, set $\xi = \frac{\gamma}{d_\gamma}$. Let $U \subset \bbS$ be an open set with $\dit(U, \partial \bbS) > 0$, and $K \subset U$ a compact set. Then with probability tending to 1 as $n \to \infty$ we have  
\eqb\label{eq_apprx_upper}
\max_{u,v \in [nK]} D^{\xi}_{\eta^n }(u,v;[nU]) \leq n^{\frac{2}{d_\gamma} +\frac{\gamma^2}{2d_\gamma} + o(1)}.
\eqe
and
\eqb\label{eq_apprx_lower}
D^{\xi}_{\eta^n }([nK], [n\partial U]) \geq n^{\frac{2}{d_\gamma} +\frac{\gamma^2}{2d_\gamma} -o(1)}
\eqe
\end{lemma}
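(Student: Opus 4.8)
The plan is to transfer the continuum LFPP distance exponent of \cite[Theorem 1.5]{dg-lqg-dim} to DLFPP using the coupling of Theorem~\ref{thm_coupling}, inserting a slightly larger intermediate domain to absorb the lattice-versus-continuum boundary issues. Fix $\gamma$, set $\xi=\frac{\gamma}{d_\gamma}$, and let $K \subset U$ with $\dit(U,\partial\bbS)>0$. First I would choose a rectilinear polygon $P$ with $K \subset P \subset \overline P \subset U$ and $\dit(P,\partial\bbS)>0$ (possible since $K$ is compact and $U$ open); one can even arrange $K$ to lie in the interior of $P$ so that for large $n$ we have $[nK] \subset [nP]$ and $[nP] \subset [nU]$. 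By Remark~\ref{rem_LFPP_defn}, \cite[Theorem 1.5]{dg-lqg-dim} (rescaled to $n\bbS$ with unit circle-average radius) gives that with probability tending to $1$, the continuum LFPP distances $D^\xi_{h^n,\LFPP}(z,w;nP)$ for $z,w\in nK$, and $D^\xi_{h^n,\LFPP}(nK,n\partial P;nP)$, are each $n^{\frac{2}{d_\gamma}+\frac{\gamma^2}{2d_\gamma}+o(1)}$ — strictly speaking \cite[Theorem 1.5]{dg-lqg-dim} is stated for point-to-point and crossing distances in nested squares, but the same tightness-of-exponent statement holds for any fixed nested pair of nice domains, as explained in \cite{dg-lqg-dim}; alternatively one can sandwich $P$ between two concentric squares and use monotonicity of restricted distances in the domain.

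Next I would apply the second part of Theorem~\ref{thm_coupling} with the polygon $P$ and a small parameter $\zeta>0$. For the upper bound \eqref{eq_apprx_upper}: for $u,v\in[nK]$, pick $z,w\in nK$ with $[z]=u$, $[w]=v$; the lower bound side of Theorem~\ref{thm_coupling} gives $n^{-\zeta}D^\xi_{\eta^n}(u,v;[nP]) \le D^\xi_{h^n,\LFPP}(z,w;nP)$, hence $D^\xi_{\eta^n}(u,v;[nP]) \le n^{\zeta}D^\xi_{h^n,\LFPP}(z,w;nP) \le n^{\frac{2}{d_\gamma}+\frac{\gamma^2}{2d_\gamma}+\zeta+o(1)}$. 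Since paths in $[nP]$ are a subset of paths in $[nU]$, $D^\xi_{\eta^n}(u,v;[nU]) \le D^\xi_{\eta^n}(u,v;[nP])$, giving \eqref{eq_apprx_upper} after sending $\zeta\to0$ along a sequence (the $o(1)$ absorbs it). For the lower bound \eqref{eq_apprx_lower}: any DLFPP path in $[nU]$ from $[nK]$ to $[n\partial U]$ must cross the annular region between $\partial P$ and $\partial U$, so $D^\xi_{\eta^n}([nK],[n\partial U]) \ge D^\xi_{\eta^n}([nK],[n\partial P];[nP'])$ for a slightly larger polygon $P'$ with $\overline P\subset P'\subset\overline{P'}\subset U$ — more carefully, restrict a geodesic at its first exit from $[nP]$ to bound below by $D^\xi_{\eta^n}([nK],[n\partial P];[nP])$. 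Then the upper bound side of Theorem~\ref{thm_coupling} (dropping the $e^{\xi\sqrt{\pi/2}\eta^n([z])}$ correction, which only helps us since we want a lower bound on $D^\xi_{\eta^n}$) gives $D^\xi_{h^n,\LFPP}(z,w;nP) \le n^{\zeta}\big(D^\xi_{\eta^n}([z],[w];[nP]) + e^{\xi\sqrt{\pi/2}\eta^n([z])}\big)$, and since the circle-average field is bounded by $O((\log n)^{1/2+o(1)})$ off the boundary with high probability (a union bound as in Lemma~\ref{lem_coupling_close}, or Proposition~\ref{prop_coupling_close}), the correction term is $n^{o(1)}$, which is negligible against $n^{\frac{2}{d_\gamma}+\frac{\gamma^2}{2d_\gamma}-o(1)}$; rearranging yields \eqref{eq_apprx_lower}.

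The main obstacle, and the step requiring the most care, is matching up the domains and the set of competitor paths across the discrete/continuum divide: ensuring that a DLFPP geodesic in $[nU]$ connecting $[nK]$ to $[n\partial U]$ genuinely induces a restricted crossing in $[nP]$ (so that one can legitimately apply Theorem~\ref{thm_coupling}, which is stated for restricted distances $D^\xi_{\eta^n}(\cdot,\cdot;[nP])$ with $z,w \in nP$), and conversely that the continuum crossing distance in $nP$ dominates the quantity we can control. This is handled by the standard device of stopping a geodesic at its first exit from the smaller region, together with the flexibility in choosing the nested sequence $K \subset P \subset P' \subset U$; it is routine but must be written carefully because Theorem~\ref{thm_coupling} only compares distances within a single fixed polygon. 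A secondary point is that \cite[Theorem 1.5]{dg-lqg-dim} is phrased for squares, so one either invokes the (routine) extension of its proof to general nice nested domains or sandwiches $P$ between concentric squares and uses domain monotonicity of restricted distances; either way the exponent is unchanged.
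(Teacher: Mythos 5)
Your overall strategy is the same as the paper's: transfer the continuum LFPP exponent of \cite[Theorem 1.5]{dg-lqg-dim} (together with its extension to general nested domains) to DLFPP via Theorem~\ref{thm_coupling}, using an intermediate rectilinear polygon and domain monotonicity of restricted distances. The upper bound \eqref{eq_apprx_upper} is handled exactly as in the paper, and your variant of the lower bound (shrinking to $K\subset P\subset U$ and truncating a geodesic at its first exit from $[nP]$, rather than the paper's choice of a larger polygon $U\subset P\subset \bbS$ containing the whole crossing) is a legitimate rearrangement.

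However, there is a genuine error in the one step that actually requires care: your treatment of the correction term $e^{\xi\sqrt{\pi/2}\,\eta^n([z])}$ coming from the upper bound in Theorem~\ref{thm_coupling}. You claim this term is $n^{o(1)}$ because ``the circle-average field is bounded by $O((\log n)^{1/2+o(1)})$ off the boundary with high probability,'' citing Lemma~\ref{lem_coupling_close} and Proposition~\ref{prop_coupling_close}. Those results control the maximum of the \emph{difference} $h^n_1-\sqrt{\pi/2}\,\eta^n$, whose pointwise variance is $O(1)$; the field itself has pointwise variance of order $\log n$, and its maximum over the $\asymp n^2$ relevant lattice points is of order $\log n$ (indeed $(2+o(1))\sqrt{2/\pi}\log n$ for the DGFF, by Bolthausen--Deuschel--Giacomin), so the correction term is $n^{2\xi+o(1)}$, which is polynomially large, not $n^{o(1)}$. (Your parenthetical that one may simply ``drop'' the correction because you want a lower bound on $D^\xi_{\eta^n}$ is also backwards: after rearranging, the correction is subtracted from the lower bound and must be shown negligible.) The gap is repairable, and this is precisely what the paper does: invoke the DGFF maximum result to bound the correction by $n^{2\xi+\xi\zeta}$, and then observe that $2\xi=\frac{2\gamma}{d_\gamma}<\frac{2}{d_\gamma}+\frac{\gamma^2}{2d_\gamma}$ since $2\gamma<2+\frac{\gamma^2}{2}$ for $\gamma\in(0,2)$, so the correction is dominated by $n^{\frac{2}{d_\gamma}+\frac{\gamma^2}{2d_\gamma}-\zeta}$ for $\zeta$ small. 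This exponent comparison is essential and is absent from your argument; with the $(\log n)^{1/2}$ claim removed, your proof as written does not close.
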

\begin{proof}
We use the coupling of Theorem~\ref{thm_coupling}. By \cite[Lemma 2.1, Theorem 1.5]{dg-lqg-dim} and Remark~\ref{rem_LFPP_defn}, we see that for any $\zeta > 0$, with probability tending to 1 as $n \to \infty$ we have
\eqb\label{eq_LFPP_bds}
\max_{z,w \in nK} D^\xi_{h^n, \LFPP} (z,w;nU) \leq n^{\frac{2}{d_\gamma} +\frac{\gamma^2}{2d_\gamma}+\zeta}.
 \quad \text{ and } \quad
 D^\xi_{h^n, \LFPP}(nK, n\partial U) \geq n^{\frac{2}{d_\gamma} + \frac{\gamma^2}{2d_\gamma}-\zeta}    
\eqe

We first prove \eqref{eq_apprx_upper}. Choose open rectilinear $P$ so that $K \subset P \subset U$ and $\dit(K, \partial P ), \dit(P, \partial U) > 0$. Clearly $D^{\xi}_{\eta^n}(u,v;[nU]) \leq D^{\xi}_{\eta^n}(u,v;[nP])$ for any $u,v \in [nK]$. 
Thus, combining the lower bound of Theorem~\ref{thm_coupling} with the first equation of \eqref{eq_LFPP_bds} (with $U$ replaced by $P$) gives \eqref{eq_apprx_upper}:
\[\max_{u,v \in [nK]} D^{\xi}_{\eta^n}(u,v;[nU]) \leq \max_{u,v \in [nK]}  D^{\xi}_{\eta^n}(u,v;[nP]) \leq n^\zeta \max_{z,w \in nK} D^\xi_{h^n, \LFPP} (z,w;nP) \leq n^{\frac{2}{d_\gamma} +\frac{\gamma^2}{2d_\gamma}+2\zeta}. \]

Next we prove \eqref{eq_apprx_lower}. Choose any rectilinear polygon $P$ with $U \subset P \subset \bbS$ and $\dit(U,\partial P),\dit(P,\partial \bbS) > 0$. Note that any shortest path from $K$ to $\partial U$ stays in $P$. Combining the upper bound of Theorem~\ref{thm_coupling} with the second equation of \eqref{eq_LFPP_bds} gives, with probability tending to 1 as $n \to \infty$,
\eqb \label{eq_dominating_term}
D^{\xi}_{\eta^n }([nK], [n\partial U]) \geq n^{\frac{2}{d_\gamma} +\frac{\gamma^2}{2d_\gamma} -\zeta} - e^{\xi \sqrt{\frac\pi2} \max_{v} \eta^n(v)}.
\eqe
We now check that the first term in the RHS dominates the second, and consequently obtain \eqref{eq_apprx_lower}.
\cite[Theorem 2]{bolthausen2001} states that the maximum of a zero boundary DGFF on $[n\bbS]$ is $(1+o(1))2\sqrt{\frac2\pi}  \log n$ with probability tending to 1 as $n \to \infty$. Thus with probability tending to 1 as $n \to \infty$ we have $\max_{v \in [n\bbS]} \sqrt{\frac\pi2}\eta^n(v) \leq (2 + \zeta) \log n $, and since $\frac2{d_\gamma} + \frac{\gamma^2}{2d_\gamma} > \frac{ 2\gamma}{ d_\gamma} = 2\xi  $ for $\gamma \in (0,2)$, we conclude that for $\zeta>0$ sufficiently small in terms of $\gamma$ we have
\[ D^{\xi}_{\eta^n }([nK], [n\partial U]) \geq n^{\frac{2}{d_\gamma} +\frac{\gamma^2}{2d_\gamma} -\zeta} - n^{2\xi + \xi \zeta} = (1-o(1) )n^{\frac{2}{d_\gamma} +\frac{\gamma^2}{2d_\gamma} -\zeta}  .
\]
Hence we have \eqref{eq_apprx_lower}, so we are done.
\end{proof}
Now we are ready to prove Theorem~\ref{thm_fractal_dimension}.

\begin{proof}[Proof of Theorem~\ref{thm_fractal_dimension}]
To compute the point-to-point distance \eqref{eq_ptp}, first apply \eqref{eq_apprx_lower} with $K, U$ chosen so $z \in K$ and $w \not \in U$ to get the lower bound, then apply \eqref{eq_apprx_upper} with $K$ containing both $z$ and $w$ to get the upper bound.

Finally, \eqref{eq_ptp} with $z \in K$ and $w \not \in U$ gives us the the upper bound for $D^\xi_{\eta^n}([nK], [n\partial U])$, and \eqref{eq_ptp} with any distinct $z,w \in K$ yields the lower bound for $\max_{u,v \in [nK]}D^\xi_{h^n}(u,v; [nU])$.
\end{proof}

\bibliographystyle{hmralphaabbrv}
\bibliography{cibib,bibmore}

\end{document}